\newtheorem{thm}{Theorem}[section]
\newtheorem{lem}[thm]{Lemma}
\newtheorem{prp}[thm]{Proposition}
\newtheorem*{prob}{Problem}
\theoremstyle{remark}
\newtheorem{rmk}[thm]{Remark}
\newtheorem{ex}[thm]{Example}
\theoremstyle{definition}\newtheorem{dfn}[thm]{Definition}
\newcommand{\N}{\mathbb{N}}
\newcommand{\C}{\mathbb{C}}
\newcommand{\Ka}{K_G}
\newcommand{\Kw}{K_{G,p}}
\newcommand{\Qa}{Q_{E,G}}
\newcommand{\RH}{\mathcal{R}}
\DeclareMathOperator{\Aut}{Aut}
\DeclareMathOperator{\id}{id}
\def\CB{\color{black} }
\DeclareMathOperator{\re}{Re}
   \address{Istituto Nazionale di Alta Matematica ``Francesco Severi'', Research Unit Scuola Normale Superiore, Piazza dei Cavalieri 7, 56126 Pisa, Italy}
\email{{\tt dallara@altamatematica.it}}
     \address{ Dipartimento di Ingegneria Gestionale, dell'Informazione e della Produzione, Universit\`a degli Studi di Bergamo, Viale G. Marconi 5, 24044 Dalmine, Italy.}
\email{{\tt alessandro.monguzzi@unibg.it}}
\keywords{Bergman projection, ramified coverings, reflection groups}
\thanks{{\em Math Subject Classification 2020:} 32A36 .}
\title[Nonabelian ramified coverings and Bergman projection]{Nonabelian ramified coverings and $L^p$-boundedness of Bergman projections in $\mathbb C^2$}
\author[G. Dall'Ara, A. Monguzzi]{G. Dall'Ara and A. Monguzzi}
\date{}
\begin{document}

\begin{abstract}
    In this work we explore the theme of $L^p$-boundedness of Bergman projections of domains that can be covered, in the sense of ramified coverings, by ``nice'' domains (e.g. strictly pseudoconvex domains with real analytic boundary). In particular, we focus on two-dimensional normal ramified coverings whose covering group is a finite unitary reflection group. In an infinite family of examples we are able to prove $L^p$-boundedness of the Bergman projection for every $p\in(1,\infty)$.   
\end{abstract}
\maketitle


\section{Introduction and main results}

\subsection{The \texorpdfstring{$L^p$-boundedness}{\space} problem for Bergman projections} 

Given a domain $D\subseteq \mathbb C^n$, which we always assume to be nonempty and bounded, let $L^2(D)$ be the space of square-integrable functions on $D$ with respect to the Lebesgue measure. We define the Bergman space as \[A^2(D):=L^2(D)\cap \mathcal O(D),\] where $\mathcal O(D)$ is the space of holomorphic functions on $D$.  Then $A^2(D)$ is a closed subspace of $L^2(D)$, a fact that allows to define the Bergman projection $P_D$ as the Hilbert space orthogonal projector
\[
P_D: L^2(D)\to A^2(D).
\]
This is an integral operator, whose integral kernel $K_D(z,w)$ is called the Bergman kernel of the domain $D$. See, e.g., Chapter 1 of \cite{krantz_book} for these classical facts. 

By definition, the projection $P_D$ is bounded with respect to the $L^2$ norm for any domain $D$, whereas it is a hard and challenging problem to understand its behavior with respect to other $L^p$ norms (a problem sometimes called the "$L^p$-regularity problem" in the literature). More precisely, one asks for which $p\in [1,\infty]$ the a priori estimate
\begin{equation}\label{a-priori}
\|P_D f\|_{L^p}\leq C\|f\|_{L^p}\qquad\forall f\in L^2(D)\cap L^p(D)
\end{equation}
holds for some finite positive constant $C$. In this case $P_D$ extends uniquely to a bounded operator $P_D:L^p(D)\to L^p(D)$ (in virtue of the density of $L^2(D)\cap L^p(D)$ in $L^p(D)$). In other words, one is interested in determining the set
\[
\mathcal I(P_D):=\left\{\frac1p\in [0,1]: \eqref{a-priori} \textrm{ holds}\right\}.
\]
By the self-adjunction of $P_D$ on $L^2(D)$  and standard interpolation results, this set is an interval symmetric with respect to the value $\frac1p=\frac12$. Moreover, it is shown in \cite{dallara-L1} that $1\notin \mathcal I(P_D)$ if the domain is smooth, while it does not seem to be known whether this is the case for every bounded domain $D$. 

The set $\mathcal I(P_D)$ is known to be the whole open interval $(0,1)$ when $D$ lies in various classes of smooth pseudoconvex domains \cite{forelli_rudin, phong_stein, NRSW, mcneal_stein,mcneal_singular, charpentier_dupain, lanzani_stein_minimal}, and the literature is rich with examples where $\mathcal I(P_D)$ is strictly contained in $(0,1)$ \cite{krantz_peloso_houston, edholm_pacific, edholm_mcneal, monguzzi} or even degenerates to the singleton $\{\frac12\}$ \cite{barrett_irregularity, zeytuncu, krantz_peloso_stoppato, EM3, krantz_monguzzi_peloso_stoppato}.
We refer the reader to the recent survey paper \cite{zeytuncu_survey} for a more extensive discussion and bibliography. 

Despite the rich literature, several aspects of this fundamental problem are far from being understood. In particular, it would be desirable to better understand \emph{how the presence, and nature, of singular points in the boundary of the domain $D$ is reflected on $\mathcal I(P_D)$}. The recent papers \cite{chen_krantz_yuan_polydisc, bender_chakrabarti_edholm_mainkar} made progress on this question, investigating the $L^p$-boundedness problem for certain classes of singular domains \emph{covered, in the sense of ramified coverings, by polydiscs}. It is the goal of this paper to further develop this theme. 

\subsection{Ramified coverings and previous works}\label{sec-ramified}

Let us begin by making precise what is meant by "ramified covering" in this paper. For us, a \emph{ramified covering} of a bounded domain $D$ consists of the following data:\begin{itemize}
    \item a bounded domain $E\subseteq \mathbb C^n$;
    \item two proper complex analytic subsets $V\subseteq E$ and $W\subseteq D$;
    \item a finite-sheeted holomorphic covering map $\pi:E\setminus V\rightarrow D\setminus W$, that is, a finite-sheeted covering map in the usual sense (see, e.g., Chapter III of \cite{bredon_book}) that is also holomorphic. 
\end{itemize}

In this case, one says that $D$ is covered by $E$ via the map $\pi$. The \emph{degree of the covering} $\pi$ is the number of sheets, that is, the cardinality of the fiber $\pi^{-1}(z)$ over any point $z\in D\setminus W$ (this cardinality is constant in $z$, because $D$, and hence $D\setminus W$, is connected; see, e.g., Prop. 6 of Appendix A6 of \cite{chirka_book} for this fact). 

We sometimes use the suggestive notation $\pi:E\dashrightarrow D$ to denote a ramified covering, stressing the fact that $\pi$ need not be defined on the whole $E$. In fact, $\pi$ admits a holomorphic extension to $E$ (because zero sets of holomorphic functions are removable for bounded holomorphic functions; see, e.g., Appendix A1.4 of \cite{chirka_book}), but notice that the image of the extension is only a subset of the closure $\overline{D}$ in general. 

Our interest on ramified coverings stems from the fact that there are many examples of singular (that is, nonsmooth) domains that can be covered by nonsingular, or less singular, ones via "nice" maps.  

\begin{ex}[from \cite{chen_krantz_yuan_polydisc,bender_chakrabarti_edholm_mainkar}]\label{ex-hartogs} The generalized (rational) Hartogs triangle is the nonLipschitz domain
\[
\mathbb H^{\frac mn}=\{(z_1,z_1)\in\mathbb C^2: |z_1|^\frac mn<|z_2|<1\}, 
\]
where $m,n\geq 1$ are integers. Then $\mathbb H^{\frac mn}$ is covered by the bidisc, via the map $\pi:\mathbb D\times\mathbb D^\times\to \mathbb H^{\frac mn}$ given by
\[
\pi(w_1,w_2)= (w_1 w_2^n, w_2^m).
\]
Here $\mathbb{D}$ is the unit disc and $\mathbb{D}^\times=\mathbb{D}\setminus\{0\}$.
\end{ex}

\begin{rmk}\label{rmk:proper}
If the holomorphic map $\pi: E\to D$ is \emph{proper}, that is, $\pi^{-1}(K)$ is compact whenever $K\subseteq D$ is compact, then $\pi$ defines a ramified covering. More precisely, the "branching locus" is the image of the zero set of the complex Jacobian determinant of $\pi$, i.e., \[W=\pi(\{J(\pi)=0\}),\] and $V=\pi^{-1}(W)$. Here $J(\pi)$ is the determinant of the complex Jacobian matrix of $\pi$. See, e.g., \cite[Theorem 15.1.9, Remark 15.1.10]{rudin_ftub}.
\end{rmk}

To every ramified covering $\pi:E\dashrightarrow D$, it is associated a \emph{covering group} $G$, namely 
\[
G=\{g\in \Aut(E\setminus V): \pi(g.z)=\pi(z)\quad\forall z \in E\setminus V\}.
\]
A ramified covering $\pi:E\dashrightarrow D$ is said to be \emph{normal} if the associated covering group $G$ acts transitively on the fibers of $\pi$, that is, \[
\pi(z_1)=\pi(z_2) \quad \Longleftrightarrow \quad \exists g\in G\colon\ g.z_1=z_2.
\] 
In this case, the cardinality of $G$ equals the degree of the covering.

\begin{rmk}
In the definition of $G$, $\Aut$ may be interpreted either as the automorphism group in the holomorphic category or in the topological category, since the property \[\pi(g.z)=\pi(z)\quad\forall z \in E\setminus V\] automatically forces a continuous $g$ to be holomorphic. Hence, $G$ is the ordinary deck transformation group of the covering $\pi:E\setminus V\rightarrow D\setminus W$. 
\end{rmk}

Let $\pi:E\dashrightarrow D$ be a ramified covering as above. If $P_E$ and $P_D$ denote the Bergman projections on $E$ and $D$ respectively, a result of S. Bell (Theorem 1 of \cite{bell_proper}) implies the following two propositions, showing that the $L^p$-boundedness problem may be "lifted along a ramified covering". Detailed proofs are given in Section \ref{section-lifting-Lp} below (but see also Theorem 4.15 of \cite{bender_chakrabarti_edholm_mainkar}, where a different terminology is employed and also \cite{ghosh2021weighted, ghosh2022toeplitz} for related work). 

\begin{prp}\label{prp_lifting_0}
Let $p\in [1,+\infty)$ and $C>0$. The Bergman projection $P_D$ satisfies the estimate 
\begin{equation*}
\int_D|P_Du|^p\leq C \int_D|u|^p\qquad \forall u\in L^2(D)\cap L^p(D)
\end{equation*}
if and only if the Bergman projection $P_E$ satisfies the weighted estimate
\begin{equation}\label{Bergman_E_0}
\int_E|P_Ev|^p\sigma\leq C \int_E|v|^p\sigma\qquad \forall v\in \pi^*(L^2(D))\cap L^p(E,\sigma),
\end{equation}
where $\sigma:=|J(\pi)|^{2-p}$ and $\pi^*(L^2(D))=\{J(\pi)u\circ \pi\colon\ u\in L^2(D)\}$. 

Moreover, if $\pi$ is normal with covering group $G$, then \begin{equation}\label{description}
\pi^*(L^2(D))=\{v\in L^2(E)\colon \ J(g)v\circ g=v\quad \forall g\in G\}. 
\end{equation}
\end{prp}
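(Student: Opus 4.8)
The plan is to reduce everything to two ingredients: the change of variables formula for the finite covering $\pi$ and S. Bell's transformation law for Bergman projections under proper holomorphic maps. Throughout I write $\pi^*u = J(\pi)\cdot(u\circ\pi)$ and let $m$ denote the degree of the covering. The starting observation is purely measure-theoretic: since $\pi$ is, away from the measure-zero sets $V$ and $W$, an $m$-to-one local biholomorphism whose real Jacobian equals $|J(\pi)|^2$, for every nonnegative measurable $f$ on $D$ one has $\int_E (f\circ\pi)\,|J(\pi)|^2 = m\int_D f$. Applying this with $f=|u|^p$ and recalling $\sigma = |J(\pi)|^{2-p}$ gives the fundamental identity
\[
\int_E |\pi^*u|^p\,\sigma = \int_E |J(\pi)|^2\,|u\circ\pi|^p = m\int_D |u|^p .
\]
In particular $\pi^*$ maps $L^2(D)$ injectively and isometrically, up to the factor $\sqrt m$, into $L^2(E)$, and it identifies $L^2(D)\cap L^p(D)$ with $\pi^*(L^2(D))\cap L^p(E,\sigma)$; this last fact is exactly what makes the function spaces in the two estimates correspond under $u\mapsto \pi^*u$.

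Next I would invoke Bell's theorem in the form $P_E\circ\pi^* = \pi^*\circ P_D$, i.e. $P_E(\pi^*u) = \pi^*(P_Du)$ for all $u\in L^2(D)$. Granting this, the equivalence of the two estimates is immediate and preserves the constant $C$: given $u\in L^2(D)\cap L^p(D)$, set $v=\pi^*u$, which lies in $\pi^*(L^2(D))\cap L^p(E,\sigma)$ by the displayed identity; then the same identity applied to $P_Du$ together with $P_Ev=\pi^*(P_Du)$ yields $\int_E|P_Ev|^p\sigma = m\int_D|P_Du|^p$ and $\int_E|v|^p\sigma=m\int_D|u|^p$, so the two inequalities differ only by the common factor $m$. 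Conversely every $v\in\pi^*(L^2(D))\cap L^p(E,\sigma)$ is of the form $\pi^*u$ with $u\in L^2(D)$, and the displayed identity forces $u\in L^p(D)$, so the correspondence is onto and the reverse implication follows the same way.

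For the description \eqref{description} in the normal case, I would first differentiate the relation $\pi\circ g=\pi$ (valid on $E\setminus V$) to obtain, via the chain rule and the multiplicativity of the determinant, the cocycle identity $J(g)\cdot (J(\pi)\circ g)=J(\pi)$, equivalently $J(\pi)\circ g = J(\pi)/J(g)$ on $E\setminus V$, where $J(g)$ is nonvanishing since $g$ is an automorphism. The forward inclusion is then a direct computation: for $v=\pi^*u$,
\[
J(g)\,(v\circ g) = J(g)\,(J(\pi)\circ g)\,(u\circ\pi\circ g) = J(\pi)\,(u\circ\pi) = v ,
\]
using $\pi\circ g=\pi$ and the cocycle identity. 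For the reverse inclusion, take $v\in L^2(E)$ satisfying $J(g)(v\circ g)=v$ for all $g\in G$ and set $w=v/J(\pi)$ on $E\setminus V$ (note $\{J(\pi)=0\}\subseteq\pi^{-1}(W)=V$). The invariance of $v$ and the cocycle identity give $w\circ g=w$ for every $g\in G$; since $\pi$ is normal, $G$ acts transitively on the fibers of $\pi|_{E\setminus V}$, so $w$ is constant on fibers and descends to a measurable $u$ on $D\setminus W$ with $w=u\circ\pi$, whence $v=\pi^*u$ almost everywhere. Finally the $p=2$ case of the change-of-variables identity gives $\int_E|v|^2=m\int_D|u|^2$, so $v\in L^2(E)$ forces $u\in L^2(D)$ and hence $v\in\pi^*(L^2(D))$.

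I expect the main obstacle to be not the algebra above but the justification of Bell's transformation law in the present generality. Bell's theorem is stated for proper holomorphic maps, whereas our ramified covering is a priori only a holomorphic covering of $D\setminus W$ whose extension maps $E$ into $\overline D$; one therefore has to check that the hypotheses under which $P_E\circ\pi^*=\pi^*\circ P_D$ holds are met here, or reprove the intertwining directly, using that $\pi^*$ carries $A^2(D)$ into $A^2(E)$ and, via its adjoint together with the change-of-variables isometry, carries the orthogonal complement into the orthogonal complement. A secondary point requiring care is the measurability and well-definedness of the descended function $u$ in the reverse inclusion, which rests squarely on the transitivity built into the normality hypothesis; that the removable sets $V,W$ have measure zero is what lets one ignore them throughout.
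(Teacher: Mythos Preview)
Your proof is correct and follows essentially the same route as the paper: the change-of-variables identity $\int_E|\pi^*u|^p\sigma=d\int_D|u|^p$, Bell's intertwining $P_E\circ\pi^*=\pi^*\circ P_D$, and for \eqref{description} the cocycle identity together with the fiberwise descent $u=v/J(\pi)$ using transitivity of $G$. The paper handles the two points you flag as obstacles exactly as you suggest---it obtains Bell's law by passing through the genuine covering $E\setminus V\to D\setminus W$ and the removable-singularity isomorphisms $A^2(E)\cong A^2(E\setminus V)$, and it makes the descent rigorous by fixing an everywhere-defined $G$-invariant representative on a $G$-saturated full-measure set.
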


When the ramified covering is normal it is possible and convenient to rephrase the restricted weighted estimate \eqref{Bergman_E_0} as an $L^p$-bound on the full $L^2(E)\cap L^p(E,\sigma)$ for an averaged operator. 

\begin{prp}\label{prp:estimate_average}
Let $\pi:E\dashrightarrow D$ be a normal ramified covering with covering group $G$, let $\Pi_G$ be the orthogonal projection  $\Pi_G: L^2(E)\to \pi^*(L^2(D))$ (see \eqref{G-average} below for a formula) and set \begin{equation*}
Q_{E,G}=P_E\circ \Pi_G.
\end{equation*} Then, the  weighted estimate \eqref{Bergman_E_0} is equivalent to the estimate 
\begin{equation}\label{Q_estimate}
\int_{E}|\Qa f|^p\sigma\leq C \int_E |f|^p\sigma\qquad \forall f\in L^2(E)\cap L^p(E, \sigma),
\end{equation}
where $\sigma=|J(\pi)|^{2-p}$. The operator $\Qa$ is an integral operator with kernel given by
\begin{equation}\label{averaged_kernel}
\Ka(z,w)=\frac{1}{|G|}\sum_{g\in G}K(z,g.w)\overline{J(g)}, 
\end{equation}
where $K(z,w)$ is the Bergman kernel of the domain $E$. 
\end{prp}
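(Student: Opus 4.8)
The plan is to proceed in three steps: first to identify the orthogonal projection $\Pi_G$ by an explicit averaging formula, then to establish the equivalence of \eqref{Bergman_E_0} and \eqref{Q_estimate}, and finally to compute the kernel of $\Qa$ by direct substitution. Throughout I would use the cocycle (chain-rule) identity $J(g\circ h)=(J(g)\circ h)\,J(h)$ for holomorphic automorphisms, together with the fact that a biholomorphism $g$ transforms Lebesgue measure by $dV(g.z)=|J(g)(z)|^2\,dV(z)$.

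First I would show that the orthogonal projection is given by the averaging formula referenced in the statement, namely
$$\Pi_G v=\frac{1}{|G|}\sum_{g\in G}J(g)\,(v\circ g).$$
Using the characterization \eqref{description}, a reindexing $g\mapsto g\circ h$ of the sum shows that $J(h)\,(\Pi_G v)\circ h=\Pi_G v$ for every $h\in G$, so $\Pi_G v\in\pi^*(L^2(D))$; the same characterization gives $\Pi_G v=v$ whenever $v$ already satisfies $J(g)\,v\circ g=v$, so $\Pi_G$ is idempotent with the correct range. Self-adjointness on $L^2(E)$ follows from the substitution $u=g.z$, which combines $dV(g.z)=|J(g)|^2\,dV$ with $J(g)(g^{-1}.u)\,J(g^{-1})(u)=1$ to turn $\langle \Pi_G v,w\rangle$ into $\langle v,\Pi_G w\rangle$ after reindexing $g\mapsto g^{-1}$; hence $\Pi_G$ is the orthogonal projection onto $\pi^*(L^2(D))$.

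For the equivalence of the two estimates, the forward implication \eqref{Q_estimate}$\Rightarrow$\eqref{Bergman_E_0} is immediate: if $v\in\pi^*(L^2(D))\cap L^p(E,\sigma)$ then $\Pi_G v=v$, so $\Qa v=P_E v$ and \eqref{Q_estimate} reduces to \eqref{Bergman_E_0}. The reverse implication is the substantive point, and I would base it on the claim that $\Pi_G$ is a contraction on $L^p(E,\sigma)$. The crucial input is the transformation law for the weight: since $\pi\circ g=\pi$, the chain rule gives $J(\pi)\circ g=J(\pi)/J(g)$, whence $\sigma\circ g=\sigma\,|J(g)|^{p-2}$. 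Applying the triangle inequality and then Jensen's inequality (convexity of $t\mapsto t^p$, $p\ge1$) to the average defining $\Pi_G f$ yields $|\Pi_G f|^p\le\frac{1}{|G|}\sum_{g}|J(g)|^p\,|f\circ g|^p$; integrating against $\sigma$, changing variables $u=g.z$ in each summand, and using both $dV(g.z)=|J(g)|^2\,dV$ and the weight law, all Jacobian powers cancel and each term contributes exactly $\int_E|f|^p\sigma$, giving $\int_E|\Pi_G f|^p\sigma\le\int_E|f|^p\sigma$. Feeding $v=\Pi_G f\in\pi^*(L^2(D))\cap L^p(E,\sigma)$ into \eqref{Bergman_E_0} then produces \eqref{Q_estimate}.

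Finally, to obtain the kernel I would insert the formula for $\Pi_G$ into $\Qa f=P_E(\Pi_G f)=\int_E K(z,w)\,(\Pi_G f)(w)\,dV(w)$, exchange the finite sum with the integral, and perform the change of variables $w\mapsto g.w$ in each summand. Using $dV(g.w)=|J(g)|^2\,dV$ together with $J(g)(g^{-1}.w)\,J(g^{-1})(w)=1$, the factor $J(g)(w)\,|J(g^{-1})(w)|^2$ collapses to $\overline{J(g^{-1})(w)}$, and after reindexing $g\mapsto g^{-1}$ the kernel reads $\frac{1}{|G|}\sum_{g}K(z,g.w)\,\overline{J(g)(w)}$, which is \eqref{averaged_kernel}. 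I expect the main obstacle to be the weighted contraction estimate for $\Pi_G$: everything rests on the exact cancellation of Jacobian factors, which in turn requires correctly tracking the transformation rules for $dV$, for $J(\pi)$ (hence for $\sigma$), and for compositions of the $J(g)$, and relies on the specific exponent $2-p$ in $\sigma=|J(\pi)|^{2-p}$ being precisely what makes the change of variables weight-preserving.
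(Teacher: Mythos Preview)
Your proposal is correct and follows essentially the same route as the paper: both arguments hinge on the fact that each $g^*$ is an isometry of $L^p(E,\sigma)$ (equivalently, your weight transformation law $\sigma\circ g=\sigma\,|J(g)|^{p-2}$), so that $\Pi_G$ is an $L^p(E,\sigma)$-contraction, which upgrades the restricted estimate \eqref{Bergman_E_0} to the full estimate \eqref{Q_estimate}. The only cosmetic difference is in the kernel computation: the paper moves $\Pi_G$ across the pairing using its self-adjointness on $L^2(E)$ and applies the averaging formula directly to $K(\cdot,w)$, whereas you substitute the formula for $\Pi_G f$ and change variables term by term; both yield \eqref{averaged_kernel}.
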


Notice that in the statements above, as in the rest of the paper, integrals are always with respect to the Lebesgue measure, which is therefore omitted from the notation. 

A first consequence of Proposition \ref{prp_lifting_0} is that \emph{weighted $L^p$ estimates for the Bergman projection on the covering space $E$ yield $L^p$ estimates for the Bergman projection on $D$}. More precisely, if we denote by $\mathcal{C}_p(E)$ the class of nonnegative weights $\sigma\in L^1_{\textrm{loc}}(E)$ for which the weighted inequality 
\begin{equation*}
\int_E|P_Ev|^p\sigma\leq C_\sigma \int_E|v|^p\sigma\qquad \forall v\in L^2(E)\cap L^p(E,\sigma)
\end{equation*}
holds, then a sufficient condition for the $L^p$-boundedness of $P_D$ is the membership of $|J(\pi)|^{2-p}$ in $\mathcal{C}_p(E)$. Hence, an effective strategy in studying the $L^p$-boundedness problem for the Bergman projection on $D$ consists in combining Proposition \ref{prp_lifting_0} with the full or partial characterizations of $\mathcal{C}_p(E)$ existing in the literature for various kinds of "nice" domains (see, e.g., \cite{bekolle_bonami,bekolle_ball, bekolle_canadian, huo_wagner_wick_certain, huo_wagner_wick}). This is indeed the approach of L. Chen, S. Krantz and Y. Yuan \cite{chen_krantz_yuan_polydisc}, who considered domains covered by polydiscs via rational maps, proving in particular the nontrivial fact that $\mathcal I(P_D)$ is always an interval with nonempty interior. While we believe that this method could be fruitfully pursued even in more general contexts (in view, in particular, of the very recent results \cite{huo_wagner_wick_certain, huo_wagner_wick}, already cited above), its major limitation is that it ignores the algebraic structure of the "restricted" space of functions $\pi^*(L^2(D))\cap L^p(E,\sigma)$ appearing in \eqref{Bergman_E_0}. As a consequence, one cannot expect this approach to provide sharp results, but only bounds on the endpoints of the interval $\mathcal I(P_D)$.

A more refined strategy should then take into account the structure of the function space $\pi^*(L^2(D))$, a task that appears to be easier when the covering is normal, thanks to the neat description given by identity \eqref{description} above. This approach was successfully used by C. Bender, D. Chakrabarti, L. Edholm and M. Mainkar in \cite{bender_chakrabarti_edholm_mainkar}, where they exactly determine $\mathcal I(P_D)$ when $D$ is a monomial polyhedron. These domains (for which see also \cite{nagel_pramanik}) generalize the Hartogs triangles of Example \ref{ex-hartogs}, being covered by polydiscs via general monomial maps. 

The normal ramified coverings of \cite{bender_chakrabarti_edholm_mainkar} are always abelian, in the sense that the covering group $G$ is a (finite) abelian group. Hence, \emph{it is natural to ask whether a similar sharp result, namely a complete description of $\mathcal{I}(P_D)$, may be obtained for interesting classes of domains $D$ that admit nonabelian ramified coverings}. We now turn our attention to a large class of domains of this kind.

\subsection{A class of nonabelian ramified coverings}\label{sec:new}

Let $G$ be a \emph{finite unitary reflection group} (from now on, f.u.r.g.), i.e., a finite subgroup of the unitary group $U(n)$ that is \emph{generated by reflections}. We recall that a (unitary) reflection is an element of $U(n)$ of finite order that fixes pointwise a hyperplane. We refer to \cite{lehrer_taylor} for the theory of f.u.r.g.'s. 

F.u.r.g.'s were completely classified by G.Shephard and J. Todd (\cite{shephard_todd}), who also proved that they are characterized among finite unitary groups by the crucial property that their ring of invariants
\[
\C[z_1,\ldots, z_n]^G=\{P\in \C[z_1,\ldots, z_n]\colon\ P(g.z)=P(z)\quad \forall g\in G\}
\]
is generated by $n$ algebraically independent homogeneous polynomials $P_1,\ldots, P_n$. Thus, for any f.u.r.g.\ $G$ and any choice of generators $P_1,\ldots, P_n$ as above, we can define the so-called $G$-orbit map $\pi:\C^n\rightarrow \C^n$ as follows: \[\pi(z)=(P_1(z),\ldots, P_n(z))\qquad (z\in \C^n).\] 

\begin{ex}[Cf. Chapter 2 of  \cite{lehrer_taylor}] \label{ex:G} Let $m, n\geq1$ and assume that $\ell$ is a positive divisor of $m$.  If $\theta=e^{\frac{2\pi i}{m}}$, the f.u.r.g. $G(m,\ell,n)$ consists of the linear transformations of the form
\[
(z_1,\ldots,z_n)\mapsto (\theta^{\nu_1}z_{\tau(1)},\ldots, \theta^{\nu_n}z_{\tau(n)})
\]
where $\tau$ is a permutation of $\{1,\ldots, n\}$ and the $\nu_j$'s are integers whose sum is divisible by $\ell$. 

The family of groups $G(m, \ell, n)$ includes familiar ones: $G(m,\ell,1)$ is the cyclic group of order $m/\ell$, $G(1,1,n)$ is the symmetric group Sym$(n)$ (acting on $\C^n$ in the standard way), 
$G(m,m,2)$ is the dihedral group of order $2m$.

An orbit map for $G(m,\ell,2)$ is given by
\begin{equation}\label{orbit_map_G}
\pi(z_1,z_2)=\big(z_1^m+z_2^m, (z_1 z_2)^{\frac m\ell}\big).
\end{equation}
\end{ex}

The following elementary proposition provides us with a large class of nonabelian ramified coverings. A proof is given in Section \ref{section-lifting-Lp}.

\begin{prp}\label{rudin_criterion_0}
Let $G$ be a f.u.r.g. and let $\pi$ be a $G$-orbit map as above. Let $E\subseteq \C^n$ be a bounded $G$-invariant domain. Then $D:=\pi(E)$ is a bounded domain and \[\pi:E\rightarrow D\] is a proper holomorphic mapping. Hence, in view of Remark \ref{rmk:proper}, it defines a normal ramified covering, whose covering group is $G$.          \end{prp}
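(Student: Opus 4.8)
The plan is to deduce the statement from three standard structural facts about the orbit map $\pi=(P_1,\dots,P_n)\colon\C^n\to\C^n$ of a f.u.r.g.\ $G$, and then to exploit the $G$-invariance of $E$ through one clean set-theoretic identity. The facts are: (i) the fibers of $\pi$ are exactly the $G$-orbits, i.e.\ $\pi(z)=\pi(w)$ if and only if $w=g.z$ for some $g\in G$; (ii) $\pi$ is a proper map of $\C^n$ into itself; and (iii) $\pi$ is an open map. Fact (i) holds because $P_1,\dots,P_n$ generate the whole ring $\C[z_1,\dots,z_n]^G$ and $G$-invariant polynomials separate $G$-orbits (see \cite{lehrer_taylor}). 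Fact (ii) follows from the integral dependence of each coordinate over the invariant ring: $z_j$ is a root of the monic polynomial $\prod_{g\in G}(T-(g.z)_j)$, whose coefficients are elementary symmetric functions of the $G$-orbit and hence lie in $\C[z]^G=\C[P_1,\dots,P_n]$; consequently $|\pi(z)|$ bounded forces $|z|$ bounded, so preimages of compact sets are compact. Fact (iii) is Remmert's open mapping theorem applied to $\pi$, which has discrete fibers between equidimensional manifolds; equivalently, $\pi$ realizes the quotient map $\C^n\to\C^n/G$, which is open. I would simply cite these.

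The crux is the identity
\[
\pi^{-1}(D)=\pi^{-1}(\pi(E))=\bigcup_{g\in G}g.E=E,
\]
where $\pi^{-1}$ denotes the full preimage in $\C^n$: the middle equality is Fact (i) and the last is the $G$-invariance of $E$. From this, the properties of $D=\pi(E)$ follow at once. It is bounded because $\pi(\overline E)$ is compact and contains $D$; it is connected as the continuous image of the connected set $E$; and it is open by Fact (iii). Thus $D$ is a bounded domain. For properness of $\pi\colon E\to D$, let $K\subseteq D$ be compact: by Fact (ii) the full preimage $\pi^{-1}(K)\subseteq\C^n$ is compact, and since $K\subseteq D$ we have $\pi^{-1}(K)\subseteq\pi^{-1}(D)=E$, so $\pi^{-1}(K)$ is a compact subset of $E$. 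Hence $\pi\colon E\to D$ is a proper holomorphic map, and Remark \ref{rmk:proper} lets us conclude that it defines a ramified covering.

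It remains to identify the covering group and verify normality. Since $E$ is $G$-invariant and $V=\pi^{-1}(W)$ is $G$-invariant (because $\pi\circ g=\pi$), each $g\in G$ restricts to an automorphism of $E\setminus V$ satisfying $\pi\circ g=\pi$, so $G$ is contained in the covering group. Given $y\in D\setminus W$, Fact (i) shows that the fiber $\pi^{-1}(y)$ is a single $G$-orbit contained in $E\setminus V$, so $G$ acts transitively on fibers and the covering is normal. To see that the covering group equals $G$, note that $E\setminus V$ is connected (the complement of a proper analytic subset of the domain $E$, proper since $J(\pi)\not\equiv0$ by the algebraic independence of the $P_i$), so any deck transformation is determined by its value at a single point; as a deck transformation maps a chosen $z_0$ into its fiber $G.z_0$, it must coincide with the corresponding element of $G$. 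Therefore the covering group is exactly $G$.

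The step requiring genuine input is the openness of $\pi$ (Fact (iii)), since polynomial maps are not open in general and one must use the reflection-group structure — equivalently the Shephard–Todd/Chevalley fact that $\C[z]^G$ is a polynomial ring, which is what makes $\C^n/G\cong\C^n$ via $\pi$. Everything else is a formal consequence of Facts (i)–(ii) combined with the $G$-invariance of $E$ through the identity $\pi^{-1}(D)=E$.
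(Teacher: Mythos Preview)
Your proof is correct and complete, but it takes a somewhat different route from the paper's. The paper uses only the single fact that $\pi$ separates $G$-orbits: from this and the $G$-invariance of $E$ it deduces $\pi(bE)\cap\pi(E)=\emptyset$, then defines $D$ as the connected component of $\C^n\setminus\pi(bE)$ containing a point of $\pi(E)$, and obtains properness of $\pi\colon E\to D$ directly from the inclusion $\pi(bE)\subseteq bD$. By contrast, you invoke three global properties of $\pi$ on all of $\C^n$ (orbit-fibers, properness via integral dependence, and openness via Remmert's theorem) and then pass to $E$ through the saturation identity $\pi^{-1}(\pi(E))=E$. Your argument is conceptually clean and makes the role of $G$-invariance transparent, at the cost of citing more external input; the paper's argument is more self-contained, relying only on orbit separation and elementary topology, but is slightly less explicit about why $\pi(E)$ coincides with the connected component $D$ (this follows from properness, since the image of a proper holomorphic map between equidimensional domains is both open and closed). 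One minor remark: your final paragraph suggests that openness of $\pi$ requires the Shephard--Todd structure, but in fact Remmert's theorem needs only that the fibers are discrete, which already follows from fact (i) for any finite $G$; the reflection-group hypothesis is what ensures the target is again $\C^n$ rather than a singular quotient.
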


\begin{rmk} Rudin \cite{rudin_refl} actually proved that any proper holomorphic mapping with domain the unit ball $B_n\subseteq \mathbb C^n$ (satisfying a mild boundary regularity assumption) is equivalent, modulo biholomorphisms of the ball and the target domain, to a $G$-orbit map as above. 
\end{rmk}

At this point, it is possible to give a general formulation of the question of main interest in this paper.

\begin{prob} Let $G$ be a f.u.r.g. Let $E$ be a "nice" $G$-invariant domain. Determine $\mathcal{I}(P_D)$, where $D=\pi(E)$ and $\pi$ is a $G$-orbit map.
\end{prob}

By Proposition \ref{prp_lifting_0} it is apparent that the Problem above becomes more difficult as the algebraic structure of the covering group $G$ becomes more complex. In fact, the complexity of $G$ is reflected both in the complexity of the weight $\sigma$ and in that of the space $\pi^*(L^2(D))$. 

Our main result is a "complexity reduction" theorem valid when $E$ and $D$ are two-dimensional. In the forthcoming section, we state this result and show how it yields an optimal solution to the $L^p$-boundedness problem for an infinite family of examples. 

\subsection{The main theorem and an application}\label{section_main_thm}
In order to state our main theorem, we need some more notation and a few more definitions. 

Given a reflection $r\in U(n)$, we let \[
Y_r:=\{v\in \C^n\colon\ r.v=v\}
\]
be its \emph{reflecting hyperplane} (in fact, as recalled above, a reflection is defined by the fact that its fixed point set is a hyperplane). If $G$ is a f.u.r.g., we denote by \[
\mathcal{R}_G=\{Y_r\colon\ r\in G \text{ is a reflection}\}
\] the set of reflecting hyperplanes of $G$. 

If $Y\in \mathcal{R}_G$, then any vector $e$ of norm $1$ orthogonal to $Y$ is called a \emph{root} of the reflection $r$.

The group $G$ acts on $\mathcal{R}_G$ in the natural way: $g.Y:=\{g.v\colon\ v\in Y\}$ when $g\in G$ and $Y\in \mathcal{R}_G$. In fact, $g.Y$ is indeed a reflecting hyperplane of $G$, because if $r$ is a reflection of $G$ and $g\in G$, one has the easily verified identity \begin{equation}\label{G-action} g.Y_r=Y_{grg^{-1}}.\end{equation}

To every $Y\in \mathcal{R}_G$ we associate the subgroup \[
G_Y:=\{g\in G\colon\ g.v=v\quad \forall v\in Y\}, 
\]
which consists of the identity plus all the reflections $r\in G$ such that $Y_r=Y$. The group $G_Y$ is easily seen to be cyclic of some finite order $m_Y$, a quantity that we call the \emph{multiplicity} of $Y$. 

More generally, we may attach to an arbitrary subset $\mathcal{S}\subseteq \mathcal{R}_G$ the reflection subgroup $G_{\mathcal{S}}$ generated by $\bigcup_{Y\in \mathcal{S}}G_Y$, namely the smallest subgroup of $G$ containing all the reflections whose reflecting hyperplane lies in $\mathcal{S}$. It is an immediate consequence of \eqref{G-action} that if $\mathcal{S}$ is $G$-invariant, then $G_{\mathcal{S}}$ is a normal subgroup of $G$. 

We can now describe a key "complexity reduction" procedure, whose importance for our goal will soon be apparent. Given a f.u.r.g. $G$, the set of reflecting hyperplanes $\mathcal{R}_G$ splits as a disjoint union of $G$-orbits \[
\mathcal{R}_G=\mathcal{S}_1\cup \ldots \cup \mathcal{S}_t, 
\]
where possibly $t=1$. Since each $\mathcal{S}_j$ is $G$-invariant, we have a family of normal reflection subgroups $\{G_j:=G_{\mathcal{S}_j}\}_{j=1,\ldots, t}$. The procedure can now be iterated, applying it to each subgroup $G_j$, then to the further subgroups of these that it produces and so on. Of course the iteration stops after a finite number of steps. The result is a tree of subgroups, as the next example shows in the case of the group $G(m,m,2)$ introduced in Example \ref{ex:G} above. 

\begin{ex}\label{ex:G2}
Let $G_m=G(m,m,2)$. We have
\begin{equation}\label{G_m}
G_m=\{r_{\theta_k}, s_{\theta_k}: \theta_k=e^{\frac{2\pi ik}{m}}, k=1,\ldots,m\},\end{equation} where
\begin{align*}
&r_{\theta_k}(z_1,z_2)=(\theta_k z_2, \theta_k^{-1}z_1)\\&s_{\theta_k}(z_1,z_2)=(\theta_k z_1, \theta_k^{-1}z_2).\end{align*}
The group $G_m$ is generated by the reflections $r_{\theta_k}$ and its reflecting hyperplanes are 
\[
Y_{r_{\theta_k}}=\{(z_1,z_2): z_1={\theta_k} z_2\}.
\]
Let now $\theta$ be a primitive $m$th-root of unity and let $j$ be a positive integer. Then
\[
(s_{\theta}) r_{\theta^j}(s_\theta)^{-1}= r_{\theta^{j+2}},
\]
and therefore
\[
s_\theta. Y_{r_{\theta^j}}= Y_{r_{\theta^{j+2}}}.
\]
It is clear that if $m$ is odd, then $\mathcal R_{G_m}$ consists of just one orbit. If $m$ is even, we have two orbits: 
\[ 
\mathcal R_{G_m}=\mathcal R_{G_\frac m2}\cup \mathcal R_{\widetilde G_{\frac m2}}. 
\]
Here $G_{\frac m2}$, defined as in \eqref{G_m}, is the normal subgroup of $G_m$ generated by the reflections \[
\big\{ r_{\theta_k}: \theta_k=e^\frac{2\pi i(2k)}{m}, k=1,\ldots,\textstyle\frac m2\big\},
\]
while $\widetilde G_{\frac m2}$ is the normal subgroup generated by the reflections
\[
\big\{ r_{\theta}: \theta=e^\frac{2\pi i(2k+1)}{m}, k=1,\ldots,\textstyle\frac m2\big\}.
\]
Notice that $\widetilde G_{\frac m2}=h G_{\frac m2}h^{-1}$, where $h(z_1,z_2)=(e^{\frac{2\pi i}{m}}z_2,z_1)$. In particular $\widetilde G_{\frac m2}$ and $G_{\frac m2}$ are conjugated as subgroups of $U(2)$ (of course not as subgroups of $G_m$, since both are normal). 

Thus, if $m=2^kd$ with $d$ odd, we can iterate the procedure $k$ times, obtaining $2^k$ pairwise $U(2)$-conjugate subgroups $G_1,\ldots, G_{2^k}$ isomorphic to $G_d$. See figure \ref{figure_tree}. At this point no further reduction is possible, since each $\mathcal{R}_{G_j}$ consists of a single orbit.

\CB
\begin{center}
\begin{figure}\label{figure_tree}
\begin{tikzpicture}[node distance=2cm]
\node(Gm)                           {$G_m$};
\node(Gm2)      [below left=0.5cm and 1.5cm of Gm]  {$G_{\frac m2}$};
\node(tGm2)       [below right=0.5cm and 1.5cm of Gm] {$\widetilde G_{\frac m2}$};
\node(Gm4)      [below left=0.5cm and 0.8cm of Gm2]{$G_{\frac{m}{4}}$};
\node(tGm4)      [below right=0.5cm and 0.8cm of Gm2]{$\widetilde G_{\frac{m}{4}}$};
\node(P2)      [below left=0.4cm and 0.4cm of tGm4]{\phantom{$H$}};
\node(P3)      [below right=0.4cm and 0.4cm of tGm4]{\phantom{$H$}};
\node(Gm8)[below left=0.4cm and 0.4cm of Gm4]{$G_{\frac{m}{8}}$};
\node(tGm8)[below right=0.4cm and 0.4cm of Gm4]{$\widetilde G_{\frac{m}{8}}$};
\node(P4)      [below left=0.5cm and 0.8cm of tGm2]       {\phantom{$C_2$}};
\node(P5)      [below right=0.5cm and 0.8cm of tGm2]      {\phantom{$C_2$}};
\draw(Gm)       -- (tGm2);
\draw(Gm)       -- (Gm2);
\draw[dashed](tGm2)       -- (P4);
\draw[dashed](tGm2)       -- (P5);
\draw(Gm2) -- (Gm4);
\draw(Gm2) -- (tGm4);
\draw[dashed](tGm4) -- (P2);
\draw[dashed](tGm4) -- (P3);
\draw(Gm4) -- (Gm8);
\draw(Gm4) -- (tGm8);
\end{tikzpicture}
\caption{The tree of subgroups obtained applying the reduction procedure to $G(m,m,2)$. Here $8$ divides $m$.}
\end{figure}
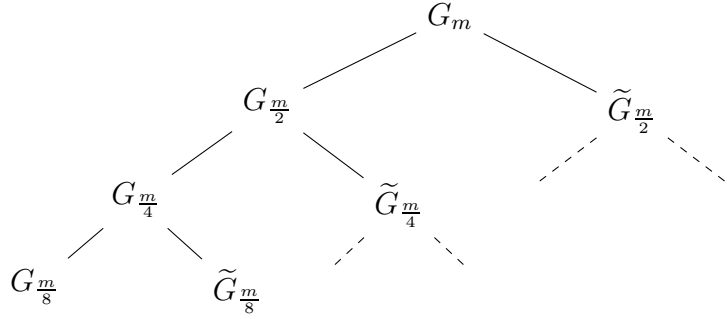
\end{center}\end{ex}
There is one last object we need to associate to a f.u.r.g. $G$. Choosing a root $e_Y$ for each reflecting hyperplane, we define the \emph{Jacobian polynomial} of $G$ as \begin{equation}\label{J_G}
J_G(z)=\prod_{Y\in\mathcal R_G}\left\langle z, e_Y\right\rangle^{m_Y-1}, 
\end{equation}
where $\left\langle\cdot, \cdot\right\rangle$ is the standard Hermitian product of $\C^n$.

The Jacobian polynomial is defined up to a multiplicative constant of unit modulus depending on the choice of roots. Its importance dwells in the fact that $|J_G(z)|=|J(\pi)(z)|$ for any choice of $G$-orbit map $\pi$  (see Proposition \ref{prp_Jacobian} below).

Finally, our main theorem holds under a mild restriction on the covering domain $E$, which we proceed to formulate. 

\begin{dfn}\label{dfn_good_point}
A bounded $G$-invariant domain $E\subseteq\mathbb C^n$ is said to be \emph{good} if for every reflecting hyperplane $Y\in \RH_G$ we have that $Y\cap E$ is connected and $Y\cap bE = b_Y(Y\cap E)$, where $b_Y$ denotes the topological boundary as a subset of $Y$. 
\end{dfn}

\begin{rmk}The unit ball $B_n$ is good with respect to every f.u.r.g. $G$. An example of a domain which is not good with respect to the group of order $2$ generated by $(z_1,z_2)\mapsto (-z_1, z_2)$ is \[
E=\{|z_1-1|^2+|z_2|^2<1\}\cup\{|z_1+1|^2+|z_2|^2<1\}\cup\{|z_1|^2+|z_2-1|^2<0.99\}.
\]
\end{rmk}

With the above notation and definitions, we can finally state our theorem. The main thrust is that it is possible to estimate a variant of the averaged kernel $K_G(z,w)$ in \eqref{averaged_kernel} with a finite sum of analogous kernels where $G$ is replaced by two smaller normal reflection subgroups.  

\begin{thm}\label{Lp_bdd}
Let $G\subseteq U(2)$ be a f.u.r.g. and let $E\subseteq\mathbb C^2$ be a good bounded $G$-invariant domain containing the origin. Assume the following: 
\begin{itemize}
    \item[(i)] the holomorphic Bergman kernel $K(z,\overline w)$ of $E$ can be analytically continued (see Section \ref{section-division-lemma} for the precise definition) past any off-diagonal boundary point of \[
\widetilde{E}=\{(z,\overline{w})\colon z,w\in E\}\subseteq\C^n\times \C^n,\]
where off-diagonal here means $(z,\overline{w})$ with $z\neq w$;
\item[(ii)]  we have two $G$-invariant sets $\mathcal S_1,\mathcal S_2\subseteq\mathcal R_G$ such that
\begin{itemize}
\item[(a)] $\mathcal R_G=\mathcal S_1\cup \mathcal S_2$;
\item[(b)] $\mathcal S_1\cap\mathcal S_2=\emptyset$;
\item[(c)] $\mathcal S_j=\mathcal{R}_{G_j}$ $(j=1,2)$, where $G_j=G_{\mathcal S_j}$ denotes the normal subgroup generated by $\mathcal S_j$.
\end{itemize}
\end{itemize}

If, for $p\in (1,+\infty)$, we set
\begin{equation}\label{kernel_KGp}
K_{G,p}(z,w) = |J_G(z)|^{\frac{2}{p}-1}K_G(z,w)|J_G(w)|^{1-\frac{2}{p}}, 
\end{equation} then there exists a positive constant $C$ such that
\begin{equation}\label{main-estimate}
|K_{G,p}(z,w)|\leq C\frac{1}{|G|}\sum_{g\in G}\left(|K_{G_1,p}(g.z,w)|+|K_{G_2,p}(g.z,w)|+1\right), 
\end{equation}
for every $z,w\in E$. 
\end{thm}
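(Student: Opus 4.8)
The plan is to reduce \eqref{main-estimate} to a pointwise comparison of kernels in which two exact ``coset identities'' and the planar geometry of the reflecting lines do the main work, while hypothesis (i) is used only near the origin. I would first record the invariances coming from the fact that every $g\in G$ acts unitarily on the $G$-invariant domain $E$: the transformation law of the Bergman kernel gives $K(gz,gw)=K(z,w)$, hence $K_G(gz,w)=\overline{\det g}\,K_G(z,w)$ and, using normality of $G_j$ to reabsorb $h^{-1}G_jh=G_j$, $K_{G_j}(hz,hw)=K_{G_j}(z,w)$ for all $h\in G$. Since $G$ permutes $\mathcal S_j$ and the multiplicities $m_Y$ are constant on $G$-orbits, one also has $|J_{G_j}(gz)|=|J_{G_j}(z)|$. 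Together these give $|K_{G,p}(gz,w)|=|K_{G,p}(z,w)|$ and $K_{G_j,p}(hz,hw)=K_{G_j,p}(z,w)$, so that the right-hand side of \eqref{main-estimate} is unchanged whether the average over $g$ is taken in the first or the second variable. Combined with the symmetry $|K_{G,p}(z,w)|=|K_{G,p'}(w,z)|$ (and likewise for $G_j$), this lets me assume $1<p\le2$, i.e.\ $a:=\frac2p-1\ge0$.

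The algebraic input is the factorization $J_G=c\,J_{G_1}J_{G_2}$ with $c$ unimodular, immediate from $\mathcal R_G=\mathcal S_1\sqcup\mathcal S_2$ once the multiplicity $m_Y$ computed in $G$ is seen to coincide with the one computed in $G_j$ for $Y\in\mathcal S_j$ --- which holds because the cyclic stabilizer $G_Y$ is generated by reflections of $G$ fixing $Y$, all lying in $G_j=G_{\mathcal S_j}$. I would then prove the exact identity
\[
K_G(z,w)=\frac1{[G:G_1]}\sum_{\gamma}\overline{\det\gamma}\,K_{G_1}(\gamma^{-1}z,w),
\]
the sum over left-coset representatives of $G_1$, by writing $g=\gamma h$ with $h\in G_1$ in the definition of $K_G$ and using $K(z,\gamma h w)=K(\gamma^{-1}z,hw)$. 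Substituting $K_{G_1}(\gamma^{-1}z,w)=|J_{G_1}(z)|^{-a}K_{G_1,p}(\gamma^{-1}z,w)|J_{G_1}(w)|^{a}$, collecting the Jacobian weights through $J_G=cJ_{G_1}J_{G_2}$, and passing from coset representatives to the full group average yields
\[
|K_{G,p}(z,w)|\le\Big(\frac{|J_{G_2}(z)|}{|J_{G_2}(w)|}\Big)^{a}\frac1{|G|}\sum_{g\in G}|K_{G_1,p}(gz,w)|,
\]
together with its mirror image obtained from $G_2$. Hence it suffices to show that, at each $(z,w)$, at least one of the two prefactors $(|J_{G_2}(z)|/|J_{G_2}(w)|)^a$, $(|J_{G_1}(z)|/|J_{G_1}(w)|)^a$ is bounded, unless $|K_{G,p}(z,w)|$ is already bounded (the $+1$).

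Two-dimensionality is what makes this dichotomy work: the reflecting hyperplanes are complex lines through $0$, so distinct ones meet only at the origin, and there are $\delta,c>0$ with the property that $|w|\ge\delta$ forces $w$ to lie at distance $\ge c\delta$ from at least one of the arrangements $\bigcup\mathcal S_1,\bigcup\mathcal S_2$. On that side the corresponding $|J_{G_j}(w)|$ is bounded below while $|J_{G_j}(z)|$ is bounded above on $E$, so the matching prefactor is controlled and the matching inequality closes the estimate. It remains to bound $|K_{G,p}(z,w)|$ by a constant on $\{|w|\le\delta\}$ (and, by the first-variable symmetry, on $\{|z|\le\delta\}$). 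There, for $z$ away from $0$ every term $K(z,gw)$ in $K_G$ is off-diagonal, so hypothesis (i) and compactness bound it up to the boundary, while the near-diagonal part (both variables near the interior point $0$) is bounded by interior regularity; combined with the division lemma of Section \ref{section-division-lemma}, which presents $K_G(z,w)/(J_G(z)\overline{J_G(w)})$ as a function continuing holomorphically across $\{z=0\}\cup\{w=0\}$, this gives $|K_G(z,w)|\lesssim|J_G(z)|\,|J_G(w)|$ and hence $|K_{G,p}(z,w)|\lesssim|J_G(z)|^{1+a}|J_G(w)|^{1-a}\le C$.

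I expect the real difficulty to sit in this last region near the origin, where both Jacobian factors vanish at once: making the boundedness of the reduced kernel $K_G/(J_G(z)\overline{J_G(w)})$ uniform up to $z\in bE$ requires marrying the off-diagonal continuation of (i) with the holomorphic extension across the coordinate degeneracies, and checking that the order of vanishing of $K_G$ along each reflecting line is exactly the one recorded by $J_G$ (so that no residual negative power survives). The subsidiary algebraic point, that $G_Y\subseteq G_j$ and hence that the factorization $J_G=cJ_{G_1}J_{G_2}$ carries the correct exponents, is where the structure theory of pointwise stabilizers in reflection groups enters.
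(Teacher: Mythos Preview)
Your plan is correct and follows the same architecture as the paper: your coset identity is precisely the averaging formula \eqref{averaging} behind the Normal Subgroup Lemma, the prefactors $(|J_{G_2}(z)|/|J_{G_2}(w)|)^a$ repackage the Jacobian comparison carried out there, and the boundedness of $K_G/(J_G(z)\overline{J_G(w)})$ on the residual region is exactly the Division Lemma together with its analytic-continuation clause. The one genuine streamlining is your preliminary reduction to $1<p\le 2$: with $a=\tfrac{2}{p}-1\ge 0$ only the $w$-variable can make the prefactor large, so your covering argument becomes one-dimensional (either $|w|\ge\delta$, forcing $w$ away from one of the two line arrangements, or $|w|<\delta$), whereas the paper, working for all $p$ at once, must impose the distance condition on both $z$ and $w$ in $E(\mathcal S_j,\delta)$ and correspondingly lands on the larger and more intricate residual set $E_{\mathrm{reg}}(\delta)$.
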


\begin{rmk}\label{rmk:main_thm}
Let $\pi$ be a $G$-orbit map for the group $G$ and set $D:=\pi(E)$. In view of the identity $|J_G(z)|=|J(\pi)(z)|$, the weighted $L^p$ estimate \eqref{Q_estimate} is equivalent to an unweighted $L^p$ estimate for the integral operator with kernel \eqref{kernel_KGp}. This clarifies the relevance of \eqref{main-estimate} for the $L^p$-boundedness problem for $P_D$. 
\end{rmk}

\begin{rmk} 
The hypothesis on the analytic continuation of the Bergman kernel is satisfied whenever the domain $E$ is strictly pseudoconvex and has real analytic boundary. See \cite{franccis_hanges} and the references therein.
\end{rmk}

\begin{rmk}\label{RGS=S}
Unfortunately, we are unaware of any general result allowing to deduce condition (ii-c) above from (ii-a) and (ii-b): the reflection hyperplanes of $G_{\mathcal{S}_j}$ form a $G$-invariant subset of $\mathcal R_G$ containing $\mathcal{S}_j$, but we cannot rule out the possibility that the containment is strict. In principle, a case-by-case check could be carried out using the classification of normal reflection subgroups of f.u.r.g.'s in \cite{arreche_williams}.
\end{rmk}

Let us now see how repeated applications of Theorem \ref{Lp_bdd} provide the promised solution of the $L^p$-boundedness problem for an infinite family of examples.

\begin{thm}\label{thm_application}
Let $B_2$ be the unit ball in $\mathbb C^2$ and let $\pi: B_2\dashrightarrow \pi_k(B_2):=D_k$ be the normal ramified covering defined by
\[
\pi_k(z_1,z_2)=(z_1^{2^k}+z_2^{2^k}, z_1z_2),
\]
for a nonnegative integer $k$. Then $\mathcal I(P_{D_k})=(0,1)$, that is, the Bergman projection of $D_k$ is $L^p$-bounded if and only if $p\in (1,+\infty)$.
\end{thm}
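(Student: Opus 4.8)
The plan is to identify the covering group and then strip away its reflecting hyperplanes one orbit at a time using Theorem \ref{Lp_bdd}. Comparing $\pi_k$ with the orbit map \eqref{orbit_map_G} (with $m=2^k$ and $m/\ell=1$), the covering group $G$ is exactly the dihedral group $G(2^k,2^k,2)$, i.e.\ the group $G_{2^k}$ of Example \ref{ex:G2}. The ball $B_2$ is good with respect to every f.u.r.g., contains the origin, and its Bergman kernel $K(z,\overline w)=\tfrac{2}{\pi^2}(1-\langle z,w\rangle)^{-3}$ is rational with singular set meeting $\overline{\widetilde{B_2}}$ only along the diagonal boundary $\{z=w\in bB_2\}$ (by Cauchy--Schwarz $\langle z,w\rangle=1$ forces $z=w\in bB_2$), so hypothesis (i) holds. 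Hence by Remark \ref{rmk:main_thm} the claim $\mathcal I(P_{D_k})=(0,1)$ is equivalent to the boundedness on $L^p(B_2)$, for every $p\in(1,\infty)$, of the integral operator $T_{G}$ with kernel $|K_{G,p}(z,w)|$, and I would prove this by induction on $k$.

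For the inductive step, fix $k\ge 1$. Example \ref{ex:G2} shows that $\mathcal R_{G_{2^k}}$ is the disjoint union of the two $G$-orbits $\mathcal S_1=\mathcal R_{G_{2^{k-1}}}$ and $\mathcal S_2=\mathcal R_{\widetilde G_{2^{k-1}}}$; since each $\mathcal S_j$ is, by construction, the full set of reflecting hyperplanes of the subgroup it generates, hypotheses (ii-a)--(ii-c) of Theorem \ref{Lp_bdd} all hold (this is the point flagged in Remark \ref{RGS=S}, here settled by the explicit description of the dihedral reflections). Theorem \ref{Lp_bdd} then gives the pointwise bound \eqref{main-estimate}, so it suffices to bound on $L^p(B_2)$ each operator on its right-hand side. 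Since $g\in U(2)$ preserves $B_2$ and Lebesgue measure, the kernel $|K_{G_j,p}(g.z,w)|$ defines the operator $f\mapsto (T_{G_j}f)(g.z)$, of the same $L^p$ norm as $T_{G_j}$, while the constant kernel defines $f\mapsto C\int_{B_2}f$, bounded because $|B_2|<\infty$. Finally $\widetilde G_{2^{k-1}}=hG_{2^{k-1}}h^{-1}$ for some $h\in U(2)$, and the unitary invariance of the ball kernel gives $K_{\widetilde G_{2^{k-1}},p}(z,w)=K_{G_{2^{k-1}},p}(h^{-1}z,h^{-1}w)$, so that $T_{\widetilde G_{2^{k-1}}}$ and $T_{G_{2^{k-1}}}$ have equal $L^p$ norm. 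Thus boundedness of $T_{G_{2^{k-1}}}$ yields boundedness of $T_{G_{2^{k}}}$.

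The base case $k=0$, with $G=\mathrm{Sym}(2)=\{\id,\tau\}$, $\tau(z_1,z_2)=(z_2,z_1)$, is the main obstacle, since $\mathcal R_G$ is a single orbit and Theorem \ref{Lp_bdd} cannot be iterated. Here $|J_G(z)|=\tfrac1{\sqrt2}|z_1-z_2|$ and $K_G=\tfrac12\big(K(z,w)-K(z,\tau w)\big)$. Setting $a=1-\langle z,w\rangle$, $b=1-\langle z,\tau w\rangle$, the identity $b-a=(z_1-z_2)\overline{(w_1-w_2)}$ lets me divide out the common zero of $K_G$ on $\{z_1=z_2\}\cup\{w_1=w_2\}$:
\[
K_G(z,w)=\tfrac{c}{2}\,(z_1-z_2)\overline{(w_1-w_2)}\,\frac{a^2+ab+b^2}{a^3b^3},\qquad |K_{G,p}(z,w)|=c'\,|z_1-z_2|^{\frac2p}|w_1-w_2|^{2-\frac2p}\,\frac{|a^2+ab+b^2|}{|a|^3|b|^3}.
\]
The crux is the compensation estimate. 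In the coordinates $u=z_1+z_2$, $v=w_1+w_2$ one has $a+b=2-u\overline v$, and the ball constraints $|z_1-z_2|^2<2-|u|^2$, $|w_1-w_2|^2<2-|v|^2$ give $2\max(|a|,|b|)\ge\re(a+b)\ge\tfrac12\big(|z_1-z_2|^2+|w_1-w_2|^2\big)$ on $B_2\times B_2$. Combining this with the weighted arithmetic--geometric inequality $|z_1-z_2|^{\frac2p}|w_1-w_2|^{2-\frac2p}\le|z_1-z_2|^2+|w_1-w_2|^2$ and with $|a^2+ab+b^2|\le\tfrac32(|a|^2+|b|^2)\le 3\max(|a|,|b|)^2$ yields
\[
|K_{G,p}(z,w)|\le C\,\frac{\max(|a|,|b|)^3}{|a|^3|b|^3}=\frac{C}{\min(|a|,|b|)^3}\le C\Big(\frac{1}{|a|^3}+\frac{1}{|b|^3}\Big).
\]

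Thus $T_G$ is dominated by the positive Bergman operator $P^+$ of $B_2$ (kernel $|1-\langle z,w\rangle|^{-3}$) plus its composition with $\tau$ (kernel $|1-\langle z,\tau w\rangle|^{-3}$); both are bounded on $L^p(B_2)$ for every $p\in(1,\infty)$, the first by the classical Forelli--Rudin/Schur estimates \cite{forelli_rudin} and the second because $\tau$ is measure-preserving. This completes the base case and the induction, giving $(0,1)\subseteq\mathcal I(P_{D_k})$. For the exact equality I would still exclude the endpoints: as $\mathcal I(P_{D_k})$ is an interval symmetric about $\tfrac12$, it suffices to check that $P_{D_k}$ is unbounded on $L^1(D_k)$, which I expect to follow from the non-integrability in $z$ of $K_{D_k}(z,w)$ as $w$ approaches the boundary, a blow-up inherited from the ball kernel through the covering. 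This would yield $\mathcal I(P_{D_k})=(0,1)$, as claimed.
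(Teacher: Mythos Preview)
Your inductive reduction via Theorem \ref{Lp_bdd} is exactly the paper's strategy, and your verification of hypotheses (i) and (ii) matches theirs. The interesting divergence is the base case $k=0$. The paper does not estimate $K_{G_1,p}$ for $G_1=\mathrm{Sym}(2)$ directly; instead it conjugates $G_1$ by a unitary to $H=\{\id,(z_1,z_2)\mapsto(-z_1,z_2)\}$ and then, in the Appendix, splits $B_2\times B_2$ into the region $|z_1\overline{w_1}|\ge\tfrac12|1-z_2\overline{w_2}|$ (where the weight ratio $|z_1/w_1|^{2/p-1}$ is bounded) and its complement (where a geometric series in $z_1\overline{w_1}/(1-z_2\overline{w_2})$ is used). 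Your route is more direct and arguably cleaner: the factorization $b^3-a^3=(b-a)(a^2+ab+b^2)$ together with the key compensation $|z_1-z_2|^2+|w_1-w_2|^2\le 2\,\re(a+b)\le 4\max(|a|,|b|)$ yields $|K_{G,p}|\le C\min(|a|,|b|)^{-3}$ in one stroke, with no region splitting. Both arguments end at the same place, namely domination by the positive Bergman operator of $B_2$ plus a unitary translate of it.

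The one genuine gap is the endpoint exclusion. Your ``expected'' non-integrability of $K_{D_k}(\cdot,w)$ is plausible but not proved, and the general result $1\notin\mathcal I(P_D)$ quoted in the introduction is stated only for smooth domains, which the $D_k$ are not. The paper closes this gap by a different, self-contained argument working at $p=\infty$: since $(\zeta^{2^k}z_1,\zeta^{2}z_2)\in D_k$ whenever $\zeta\in\overline{\mathbb D}$ and $z\in D_k$, averaging over $\zeta\in b\mathbb D$ gives the mean-value identity $\tfrac{1}{|D_k|}\int_{D_k}v=v(0)$ for $v\in L^1\cap\mathcal O(D_k)$, whence $P_{D_k}(\re g)=g$ for every $g\in A^2(D_k)$ with $g(0)=0$; applying this to $g=f-f(0)$ with $f(z)=i\log(\re z_1)$ (bounded real part, unbounded) shows $P_{D_k}$ is unbounded on $L^\infty$. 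You should replace your heuristic with this (or an equivalent) explicit argument.
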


\begin{proof}

As recalled in Example \ref{ex:G}, formula \eqref{orbit_map_G}, the mapping $\pi_k$ is an orbit map for the group $G_{2^k}=G(2^k,2^k,2)$.  

The complexity reduction procedure worked out in Example \ref{ex:G2} for the group $G_{2^k}$ shows that we are in a position to apply Theorem \ref{Lp_bdd} several times and conclude that 
\[|K_{G_{2^k}, p}(z,w)|\leq C\sum_{j=1}^{2^k}\sum_{g\in G} |K_{H_j, p}(g.z,w)| + C, 
\]
where the $H_j$'s are $2^k$ normal subgroups, all conjugate (as subgroups of $U(2)$) to $G_1=G(1,1,2)$. Notice that all the assumptions of Theorem \ref{Lp_bdd} are satisfied in the present case: assumption (i) is clear from the explicit expression for the Bergman kernel of the unit ball, while assumption (ii) is apparent from the discussion in Example \ref{ex:G2}. 

As observed in Remark \ref{rmk:main_thm}, the desired $L^p(D_k)$-boundedness of $P_{D_k}$ is equivalent to the $L^p(B_2)$-boundedness of the operator with integral kernel $K_{G_{2^k}, p}(z,w)$. Thus, our task is reduced to showing the $L^p(B_2)$-boundedness of the operators with positive kernels $|K_{H_j, p}(g.z,w)|$. 

Since the $H_j$'s are pairwise conjugated as subgroups of $U(2)$ and $B_2$ is $U(2)$-invariant, it is easy to see that it is enough to take care of, say, $|K_{G_1, p}(z,w)|$. Notice that \[
G_1=G(1,1,2)=\{s,\text{id}\}
\]
where $s(z_1,z_2)=(z_2,z_1)$. Conjugating this subgroup with $g(z_1,z_2)=(\textstyle\frac{z_1+z_2}{\sqrt{2}},\textstyle\frac{-z_1+z_2}{\sqrt 2})\in U(2)$, we obtain
\[
H=g^{-1}G_1 g=\{t, \text{id}\}
\]
where $t(z_1,z_2)=(-z_1,z_2)$. This group $H$ is the reflection group associated to the normal  ramified covering $\pi_1:B_2\to \pi_1(B_2)$ where $\pi_1(z_1,z_2)=(z_1^2,z_2)$ and, again, the operators with kernel $|K_{H,p}|$ and $|K_{G_1,p}|$ share the same $L^p(B_2)$ mapping properties. The operator with kernel $|K_{H,p}|$ turns out to be $L^p(B_2)$ for every $p\in(1,\infty)$; this is implicitly contained in \cite{bonami_lohoue, bekolle_compte}, but for the reader's convenience we provide a proof of this fact in Appendix \ref{appendix}. Therefore, we conclude that the Bergman projection of $\pi(B_2)$ where $\pi(z_1,z_2)=(z_1^{2k}+z_2^{2k},z_1z_2)$, $k\in\N$, is $L^p$-bounded for all $p\in(1,\infty)$.

In order to complete the proof of Theorem \ref{thm_application}, we still need to show that the Bergman projection is not bounded on $L^1$ or, equivalently, on $L^\infty$. This can be done generalizing a known argument establishing $L^\infty$-unboundedness for the unit ball. It is based on two facts: 

\begin{itemize}
    \item[i)] on every bounded domain $D\subseteq \C^n$ there is an unbounded function $f\in A^2(D)$ such that $\re(f)$ is bounded. 
    \item[ii)] assume further that there is a multi-index $(a_1,\ldots, a_n)\in (\N^{>0})^n$ such that \[
    \zeta\in \overline{\mathbb{D}},\quad  (z_1,\ldots, z_n)\in D\quad \Longrightarrow \quad (\zeta^{a_1}z_1, \ldots,\zeta^{a_n}z_n)\in D,  
    \]
    so that in particular $0\in D$.
 Then, for every $g\in A^2(D)$ vanishing at $0$ we have the identity $P_D(\re(g))=g$. 
\end{itemize}
Applying ii) to $g=f-f(0)$, where $f$ is as in i), we immediately see that any domain satisfying the assumption of ii) has $L^\infty$-unbounded Bergman projection. Since the domains $D_k$ satisfy this assumption with $(a_1,a_2)=(2^k, 2)$, we are reduced to checking i) and ii). 

For i), assume w.l.o.g. that $\inf_{D}\re(z_1)=0$. Then $f(z)=i\log(\re(z_1))$ (principal determination of $\log$) is the desired function, because $|\re(f)|\leq \frac{\pi}{2}$ on $D$. This proves i). 

Let us prove ii). Since $\re(g)=g+\overline{g}$ and $g\in A^2(D)$, the statement follows from the fact that $\overline{g}$ is orthogonal to the Bergman space $A^2(D)$ when $g\in A^2(D)$ vanishes at $0$. Let then $u\in A^2(D)$: we want to show that $\int_D ug=0$. We will check that any $v\in L^1(D)\cap \mathcal{O}(D)$ satisfies the mean value property \begin{equation}\label{mean_value}
\frac{1}{|D|}\int_D v = v(0), 
\end{equation}
where $|D|$ is the Lebesgue measure of $D$. By assumption the map $(z_1,\ldots, z_n)\mapsto (\zeta^{a_1}z_1, \ldots,\zeta^{a_n}z_n)$ is a measure-preserving biholomorphism of $D$ for every $\zeta\in b\mathbb{D}$ in the unit circle. Then \[
\int_D v(z_1,\ldots, z_n) = \int_D v(\zeta^{a_1}z_1, \ldots,\zeta^{a_n}z_n)\qquad \forall \zeta\in b\mathbb{D}.
\] Averaging in $\zeta\in b\mathbb{D}$ and using Fubini, we get \[
\int_D v(z_1,\ldots, z_n) = \int_D \left(\frac{1}{2\pi}\int_{b\mathbb{D}} v(\zeta^{a_1}z_1, \ldots,\zeta^{a_n}z_n)|d\zeta| \right).
\]
Since $\zeta\mapsto v(\zeta^{a_1}z_1, \ldots,\zeta^{a_n}z_n)$ is a holomorphic function on the closed unit disc, the ordinary mean value property gives the desired identity \eqref{mean_value}. 
\end{proof}

Notice that the boundaries of the domains $D_k$ of Theorem \ref{thm_application} are singular at points where the Jacobian determinant $J(\pi_k)$ vanishes. The most remarkable feature of Theorem \ref{thm_application} is probably the fact that, despite the singular nature of the domains $D_k$, their Bergman projection is bounded on the maximal range of $L^p$ spaces. To the authors' knowledge, the only "easy" example of a domain with this property is the polydisc. 

It would be of great interest to know whether the property $\mathcal{I}(P_D)=(0,1)$ is shared by other domains $D$ that are covered by the unit ball $B_2$. Notice that our method is of little or no use when the covering group has few or no normal reflection subgroups. The simplest example is probably $G=G(3,3,2)$ in the Shepard--Todd notation, a $G$-orbit map of which is given by \[
\pi_3(z_1,z_2) = (z_1^3+z_2^3, z_1z_2). 
\]

\subsection{Organization of the paper} In Section \ref{section-lifting-Lp} we prove Propositions \ref{prp_lifting_0} and \ref{prp:estimate_average}. In Section \ref{section-groups} we recall various basic facts about f.u.r.g.'s and their $G$-orbit maps, whereas Sections \ref{section-division-lemma}, \ref{section-normal-lemma} and \ref{section-covering} are devoted to the proofs of three key lemmas from which the main theorem is deduced in Section \ref{section-main}.

\subsection{Acknowledgments}
The authors would like to thank C. E. Arreche and N. F. Williams for some helpful comments on normal reflection subgroups of f.u.r.g.'s. 

The authors would also like to acknowledge the hospitality of Marco M. Peloso and the Department of Mathematics of Università Statale di Milano, where part of the research work was conducted in November 2021. 

The first author would like to acknowledge the financial support of the Istituto Nazionale di Alta Matematica "F. Severi", and the second author the support of the Hellenic Foundation for Research and Innovation (H.F.R.I.) under the “2nd Call for H.F.R.I. Research Projects to support Faculty Members \& Researchers” (Project Number: 4662).

\section{Lifting \texorpdfstring{$L^p$-boundedness}{\space} along a ramified covering}\label{section-lifting-Lp}

In this section we discuss a slight generalization of Bell's transformation rule for the Bergman projection in the setting of ramified coverings. 

We start with a few preliminary remarks. First of all, if $\psi:E_1\rightarrow E_2$ is a holomorphic mapping between two domains in $\C^n$, and $f:E_2\rightarrow \C$ then we have the pull-back operator \begin{equation}\label{pull-back}
\psi^* f:=J(\psi) f\circ \psi,
\end{equation}
where $J(\psi)$ is the complex Jacobian determinant of $\psi$. This is nothing but the natural pull-back on $(n,0)$-forms, under the (coordinate-dependent) identification of the function $f:E_2\rightarrow \C$ with the form $f(z)dz_1\wedge \cdots \wedge dz_n$. 

Next, consider a genuine covering $\pi:E\rightarrow D$ (i.e., one for which $V$ and $W$ are empty, cf. the definition in the Introduction). Then $\pi^*f$ is a well-defined element of $L^2(E)$ for every $f\in L^2(D)$, because of the change of variable formula \begin{equation}\label{change_of_variable}
\int_D g = \frac{1}{d}\int_E g\circ \pi\, |J(\pi)|^2, 
\end{equation} where $d$ is the degree of the covering. Identity \eqref{change_of_variable} holds for every nonnegative Borel function $g$ defined on $D$, and it follows from the standard change of variable formula and the fact that $\pi:E\rightarrow D$ is a $d$-to-$1$ covering map. Hence, \[||\pi^*f||_{L^2(E)} = \sqrt{d}||f||_{L^2(D)}.\] 

Of course, if $f\in A^2(D)$, then $\pi^*f$ is holomorphic, and hence in $A^2(E)$. Since finite-sheeted coverings are proper maps, by Theorem 1 of \cite{bell_proper} we have the commutative diagram 
\[
\begin{tikzcd}[row sep=huge]
L^2(E)  \arrow{r}{P_{E}}  &A^2(E)   \\
L^2(D) \arrow{u}{\pi^*}  \arrow{r}{P_{D}}  &A^2(D) \arrow{u}{\pi^*} 
\end{tikzcd}.
\]

Let now $\pi:E\dashrightarrow D$ be a ramified covering. Since $\pi:E\setminus V\rightarrow D\setminus W$ is a genuine covering, the discussion above yields the extended diagram: 
\[
\begin{tikzcd}[row sep=huge]
L^2(E)\arrow{r}&L^2(E\setminus V)  \arrow{r}{P_{E\setminus V}}  &A^2(E\setminus V)& \arrow{l} A^2(E)   \\
L^2(D)\arrow{r}&L^2(D\setminus W) \arrow{u}{\pi^*}  \arrow{r}{P_{D\setminus W}}  &A^2(D\setminus W) \arrow{u}{\pi^*} & \arrow{l} A^2(D)
\end{tikzcd}
\]
where the new horizontal arrows are restriction operators. The ones on the left are unitary isomorphisms because proper analytic subsets have measure zero. The ones on the right are also unitary isomorphism by a well-known removable singularity theorem (see, e.g., p. 687 of \cite{bell_1982}). It is now easy to define an operator $L^2(D)\rightarrow L^2(E)$, which we keep denoting by $\pi^*$, such that $\pi^*(A^2(D))\subseteq A^2(E)$ and the diagram 
\begin{equation}\label{bell_diagram}
\begin{tikzcd}[row sep=huge]
L^2(E)  \arrow{r}{P_{E}}  &A^2(E)   \\
L^2(D) \arrow{u}{\pi^*}  \arrow{r}{P_{D}}  &A^2(D) \arrow{u}{\pi^*} 
\end{tikzcd}
\end{equation}
commutes. This is Bell's theorem for ramified coverings. If $f\in L^2(D)$, then $\pi^*f$ is the $L^2$ function on $E$ coinciding with $\pi^*(f_{|D\setminus W})$ on $E\setminus V$ (where the last $\pi^*$ is the ``genuine'' pull-back operator \eqref{pull-back}). 

The range of $\pi^*$ has a neat description in the case of normal ramified coverings. This is the content of the following proposition, which proves the second part of Proposition \ref{prp_lifting_0}.

\begin{prp}[Cf. \cite{bender_chakrabarti_edholm_mainkar}]\label{bell_proper_cor_2} Let $\pi:E\dashrightarrow D$ be a normal ramified covering with covering group $G$. Then, 
\[
\pi^*(L^2(D))=\{u\in L^2(E)\colon \ u=g^* u \quad \forall g\in G\}.
\]
($g^*$ induces a unitary isomorphism of $L^2(E\setminus V)$, and hence of $L^2(E)$). Denote by $\Pi_G: L^2(E)\to \pi^*(L^2(D))$ the orthogonal projection from $L^2(E)$ onto $\pi^*(L^2(D))$. Then

\begin{equation}\label{G-average}
\Pi_G(u)=\frac{1}{|G|}\sum_{g\in G} g^*u.
\end{equation}
\end{prp}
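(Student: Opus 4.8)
The plan is to prove the set-theoretic characterization of $\pi^*(L^2(D))$ first, and then to recognize the averaging operator $\frac{1}{|G|}\sum_{g\in G}g^*$ as the orthogonal projection onto the resulting fixed subspace by the standard ``idempotent plus self-adjoint'' argument. Throughout I would work on $E\setminus V$ and $D\setminus W$, which is harmless because $V$ and $W$ have measure zero, so that $L^2(E)=L^2(E\setminus V)$ and $\pi$ is a genuine $d$-to-$1$ covering there with $J(\pi)$ nonvanishing; recall also that each $g^*$ is unitary on $L^2(E)$, which follows from the change-of-variables formula $\int_E|u|^2=\int_E|J(g)|^2|u\circ g|^2$ since $g$ is a biholomorphism of $E$.

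For the inclusion $\pi^*(L^2(D))\subseteq\{u\colon g^*u=u\ \forall g\}$ I would exploit the defining property $\pi\circ g=\pi$ of the covering group together with the chain rule. Applying $J(\cdot)$ to $\pi\circ g=\pi$ yields the Jacobian identity $(J(\pi)\circ g)\cdot J(g)=J(\pi)$, and then a direct computation gives $g^*(\pi^* f)=J(g)\cdot(J(\pi)\circ g)\cdot(f\circ\pi\circ g)=\pi^* f$ for every $f$. This step is purely formal, so every element in the range of $\pi^*$ is automatically $g^*$-invariant.

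The reverse inclusion is where the real work lies, and it is the step I expect to be the main obstacle. Given $u\in L^2(E)$ with $g^*u=u$ for all $g\in G$, I would consider the quotient $h:=u/J(\pi)$ on $E\setminus V$. Rewriting the invariance as $u\circ g=u/J(g)$ and combining it with the Jacobian identity above, one checks that $h(g.z)=h(z)$, i.e.\ $h$ is constant along $G$-orbits. Since the covering is normal, $G$ acts transitively on the fibers of $\pi$, so $h$ is in fact constant on the fibers and descends to a function $f$ on $D\setminus W$ satisfying $\pi^* f=u$. The two delicate points are the \emph{measurability} of $f$, which I would secure by working locally over evenly-covered open subsets of $D\setminus W$ and composing $h$ with local holomorphic sections of $\pi$, and the \emph{$L^2$-membership} of $f$, which follows at once from the change-of-variables formula $\int_E|u|^2=d\int_D|f|^2$ with $d=|G|$.

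Finally, for the projection formula I would set $A:=\frac{1}{|G|}\sum_{g\in G}g^*$. Although $g\mapsto g^*$ is contravariant (an anti-homomorphism, $(gh)^*=h^*g^*$), summing over the whole group gives $h^*A=Ah^*=A$ for every $h\in G$ by reindexing, since left or right multiplication by $h$ permutes $G$. From $h^*A=A$ I get $A^2=A$ immediately, and self-adjointness follows from $(g^*)^*=(g^*)^{-1}=(g^{-1})^*$ together with the same reindexing. An idempotent self-adjoint operator is an orthogonal projection, and its range is precisely the fixed subspace: a fixed vector $u$ satisfies $Au=u$, while any $Au$ is fixed because $h^*Au=Au$. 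By the first part this fixed subspace equals $\pi^*(L^2(D))$, whence $A=\Pi_G$, which is the desired formula \eqref{G-average}.
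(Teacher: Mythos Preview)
Your argument is correct and follows essentially the same route as the paper: the chain rule plus $\pi\circ g=\pi$ for the inclusion $\subseteq$, the quotient $u/J(\pi)$ descending along fibers (the paper phrases this as defining $v_0(z):=J(\pi)(w)^{-1}u_0(w)$ for $w\in\pi^{-1}(z)$, after first passing to a $G$-saturated full-measure set where the invariance holds pointwise), and the averaging formula as the projection onto the fixed subspace of a unitary representation of a finite group. The only cosmetic differences are that the paper handles the null-set issue via pointwise representatives rather than local sections, and dismisses \eqref{G-average} as ``standard'' where you spell out the idempotent/self-adjoint verification.
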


\begin{proof}
If $v\in L^2(D\setminus W)$ and $u=\pi^*v\in L^2(E\setminus V)$, then the chain rule and the identity $\pi\circ g=\pi$ give
\[
g^*u=J(g) \cdot (J(\pi)\cdot v\circ \pi)\circ g = J(\pi\circ g)\cdot v\circ (\pi\circ g)=u.
\]
By the way $\pi^*:L^2(D)\rightarrow L^2(E)$ is defined, the inclusion $\subseteq$ follows. 

For the converse inclusion it is convenient to work with everywhere defined functions and not just equivalence classes modulo null sets. Hence, given an element $u\in L^2(E)$ such that $g^*u=u$ for every $g\in G$, we may find a set of full measure $A\subseteq E\setminus V$ and a square-integrable function $u_0:A\rightarrow \C$ in the equivalence class $u$ such that the identity $J(g)\cdot u_0\circ g=u_0$ holds pointwise for every $g\in G$. Notice that the existence of such a set $A$ is not problematic since the group is finite. We may also assume that $A$ is $G$-invariant, that is, a union of fibers of the covering $\pi:E\setminus V\rightarrow D\setminus W$. For this it is enough to replace $A$ with $\bigcap_{g\in G}g.A$. Notice that, by \eqref{change_of_variable}, $B:=\pi(A)\subseteq D\setminus W$ has full measure. Let now $V=\{z\in E\colon \ J(\pi)(z)=0\}$ be the branch locus of the map $\pi$. If $z\in B$, then $J(\pi)(w)\neq 0$ for every $w$ in $\pi^{-1}(z)$ (because $\pi:E\setminus V\rightarrow D\setminus W$ is a local biholomorphism), and we may define 
\[
v_0(z):=(J(\pi)(w))^{-1}u_0(w),
\] 
where $w\in E$ is any point in $\pi^{-1}(z)$. We now prove that $v_0$ is a well-defined function. Since the covering is normal, if $w_1$ and $w_2$ are two distinct points in $\pi^{-1}(z)$, then $w_2=g.w_1$ for some $g\in G$ and $J(g)(w_1) u_0(w_2)=u_0(w_1)$, because $w_1\in A$ by construction. Hence, \[
 (J(\pi)(w_1))^{-1}u_0(w_1)=J(g)(w_1)(J(\pi)(w_1))^{-1}u_0(w_2)=(J(\pi)(w_2))^{-1}u_0(w_2),
\]
as we wished to show. Let $v$ be the (almost everywhere) equivalence class of functions represented by $v_0$. Of course, $J(\pi)\cdot v\circ \pi=u$ and $v\in L^2(D)$ since $u\in L^2(E)$. 

Finally, Formula \eqref{G-average} is standard, because $u\mapsto g^*u$ is a unitary representation of the finite group $G$ on $L^2(E)$ and $\pi^*( L^2(D))$ is the subspace of $G$-invariant vectors. 
\end{proof}

We now complete the proof of Proposition \ref{prp_lifting_0} and prove Proposition \ref{prp:estimate_average}.

\begin{proof}[Proof of Proposition \ref{prp_lifting_0}]
Assume that \eqref{Bergman_E_0} holds. For $f\in L^2(D)$ we have $P_E(\pi^*f) = \pi^*P_Df$ by Bell's theorem, i.e. the diagram \eqref{bell_diagram}. Hence, if $d$ is the degree of the covering, \begin{align*}
\int_D|P_Df|^p&=\frac{1}{d}\int_{E\setminus V}|(P_Df)\circ\pi|^p |J(\pi)|^2\\
&=\frac{1}{d}\int_E|\pi^*P_Df|^p\sigma\\
&=\frac{1}{d}\int_{E}|P_E (\pi^*f)|^p\sigma\\
&\leq C \frac{1}{d}\int_E|\pi^*f|^p\sigma\\
&= C\frac{1}{d}\int_{E\setminus V}|f\circ \pi|^p |J(\pi)|^2\\
&=C\int_D|f|^p.
\end{align*}

The proof of the converse implication is similar, and hence omitted.\end{proof}

\begin{proof}[Proof of Proposition \ref{prp:estimate_average}
]
The inequality \eqref{Bergman_E_0} is clearly equivalent to \begin{equation}\label{QPi} \int_E|\Qa v|^p\sigma\leq C \int_E|\Pi_G v|^p\sigma\qquad \forall v\in L^2(E)\cap L^p(E, \sigma).
\end{equation}
Since $\Qa\circ \Pi_G=\Qa$, \eqref{Q_estimate} implies \eqref{QPi}. To see the converse implication, notice that \begin{eqnarray*}
&&\int_E|g^*v|^p\sigma=\int_E|(v\circ g)\cdot J(g)|^p|J(\pi)|^{2-p}=\int_E|v|^p|J(\pi\circ g^{-1})|^{2-p}
=\int_E|v|^p\sigma.
\end{eqnarray*}
Hence, $||\Pi_G(v)||_{L^p(E, \sigma)}\leq ||v||_{L^p(E, \sigma)}$ and we are done.

By self-adjunction of the projection $\Pi_G$ we have \[
\Qa v(\cdot)=\int_E \Pi_Gv(w)\overline{K(w,\cdot)} = \int_E  v(w)\overline{\Pi_{G,w} K(w,\cdot)}=\int_E  v(w) K_G(\cdot,w), 
\]
where $\Pi_G$ acts in the variable $w$ in the second integral. \end{proof}

By a simple averaging argument and the classical transformation rule for the Bergman kernel under the action of biholomorphism we also obtain the alternative formula 
\[
K_G(z,w)=\frac{1}{|G|}\sum_{g\in G}K(g.z,w)J(g),
\]
and the more symmetric formula
\[
K_G(z,w)=\frac{1}{|G|^2}\sum_{g,h\in G}J(g) K(g.z,h.w)\overline{J(h)}.
\]

\section{Finite Unitary Reflection Groups and Ramified Coverings}\label{section-groups}

Here we discuss in more detail the basic properties of f.u.r.g.'s needed later, some of which have already been anticipated in the introduction.  

Let $G$ be a f.u.r.g. and let $\C[z_1,\ldots, z_n]^G$ be its ring of invariants.

\begin{thm}[\cite{shephard_todd, chevalley, lehrer_taylor}]
There exist $n$ homogeneous algebraically independent polynomials $P_1,\ldots, P_n$ such that $\C[z_1,\ldots, z_n]^G$ is generated as a $\C$-algebra by $P_1,\ldots, P_n$.
\end{thm}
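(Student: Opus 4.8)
The statement is the Chevalley--Shephard--Todd theorem, and the plan is to prove it by Chevalley's method, in which the hypothesis ``generated by reflections'' enters through a single divisibility fact. Write $S=\C[z_1,\ldots,z_n]$ and $R=S^G$, both graded, and let $\rho(p)=\frac1{|G|}\sum_{g\in G}p\circ g$ (with $p\circ g\colon z\mapsto p(g.z)$) be the Reynolds operator, a degree-preserving $R$-linear projection $S\to R$. By Noether's theorem $R$ is a finitely generated $\C$-algebra and $S$ is a finite $R$-module, so $R$ is a graded domain and $\dim R=\dim S=n$. Let $R_+\subseteq R$ be the ideal of positive-degree invariants and fix a \emph{minimal} set of homogeneous generators $f_1,\ldots,f_N$ of the ideal $R_+S\subseteq S$. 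The theorem follows from two claims: (1) the $f_i$ generate $R$ as a $\C$-algebra; (2) the $f_i$ are algebraically independent. Indeed, (1) and (2) give $R=\C[f_1,\ldots,f_N]$ with the $f_i$ independent, so $\dim R=N$, whence $N=n$. Claim (1) is a routine induction on degree: any homogeneous $h\in R_+$ lies in $R_+S$, so $h=\sum_i s_if_i$; applying $\rho$ and using $R$-linearity gives $h=\sum_i\rho(s_i)f_i$ with $\rho(s_i)\in R$ of strictly smaller degree, and induction expresses $h$ as a polynomial in the $f_i$.

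The crux is the following syzygy lemma, the only point where the reflection hypothesis is genuinely used: \emph{if $h_1,\ldots,h_m\in R$ are homogeneous with $h_1\notin(h_2,\ldots,h_m)R$, and $b_1,\ldots,b_m\in S$ are homogeneous with $\sum_jb_jh_j=0$, then $b_1\in R_+S$.} I would prove it by induction on $\deg b_1$. If $\deg b_1=0$ and $b_1=c\neq0$, then $h_1=-c^{-1}\sum_{j\geq2}b_jh_j$, and applying $\rho$ yields $h_1\in(h_2,\ldots,h_m)R$, a contradiction, so $b_1=0\in R_+S$. For the inductive step, fix a reflection $s\in G$ with root $e$ and linear form $\ell_s(z)=\langle z,e\rangle$ vanishing on $Y_s$. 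Since the $h_j$ are $G$-invariant, substituting $s.z$ for $z$ in the relation and subtracting gives $\sum_j(b_j-b_j\circ s)h_j=0$; each $b_j-b_j\circ s$ vanishes on $Y_s$, hence equals $\ell_s\widetilde b_j$ with $\deg\widetilde b_j=\deg b_j-1$, and cancelling $\ell_s$ (as $S$ is a domain) produces a shorter syzygy $\sum_j\widetilde b_jh_j=0$. By induction $\widetilde b_1\in R_+S$, so $b_1-b_1\circ s=\ell_s\widetilde b_1\in R_+S$. Because $R_+S$ is $G$-stable (its generators are invariant), the set $\{g\in G\colon b_1-b_1\circ g\in R_+S\}$ is readily seen to be a subgroup; as $G$ is generated by reflections it is all of $G$, and averaging gives $b_1-\rho(b_1)\in R_+S$. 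Finally $\rho(b_1)\in R_+\subseteq R_+S$ when $\deg b_1>0$, so $b_1\in R_+S$.

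With the lemma in hand, claim (2) follows by contradiction. Suppose the $f_i$ are algebraically dependent and let $h\in\C[y_1,\ldots,y_N]$ be a nonzero relation $h(f_1,\ldots,f_N)=0$ of minimal degree, taken homogeneous for the weighting $\deg y_i=d_i:=\deg f_i$. The invariants $h_i:=(\partial h/\partial y_i)(f_1,\ldots,f_N)$ are homogeneous of degree $\deg h-d_i$; by minimality of $h$ they are nonzero whenever $\partial h/\partial y_i\neq0$, and, the characteristic being $0$, they do not all vanish. Reorder so that $h_1,\ldots,h_m$ minimally generate the ideal $(h_1,\ldots,h_N)R$, writing $h_i=\sum_{j\leq m}a_{ij}h_j$ for $i>m$. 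Differentiating $h(f)=0$ with respect to $z_l$ and substituting gives, for each $l$, a syzygy $\sum_{j\leq m}b_{jl}h_j=0$ with $b_{jl}=\partial_{z_l}f_j+\sum_{i>m}a_{ij}\partial_{z_l}f_i$. The lemma (applicable since $h_1\notin(h_2,\ldots,h_m)R$) gives $b_{1l}\in R_+S=(f_1,\ldots,f_N)S$; as $\deg b_{1l}=d_1-1$, no generator of degree $\geq d_1$ can occur in it, in particular not $f_1$, so in fact $b_{1l}\in(f_2,\ldots,f_N)S$. Multiplying by $z_l$, summing over $l$, and using Euler's identity $\sum_lz_l\partial_{z_l}f=(\deg f)f$ then yields $d_1f_1\in(f_2,\ldots,f_N)S$, the remaining terms $\sum_{i>m}d_ia_{i1}f_i$ lying in this ideal; hence $f_1\in(f_2,\ldots,f_N)S$, contradicting the minimality of $f_1,\ldots,f_N$.

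I expect the syzygy lemma to be the main obstacle: it is the heart of the argument and the sole place where ``generated by reflections'' is used, via the divisibility $b_j-b_j\circ s\in(\ell_s)$ together with the degree induction and the subgroup/averaging step. The surrounding reductions (Noether finiteness, the dimension count $N=n$, claim (1), and the Euler-identity endgame) are comparatively routine. The whole scheme is classical, due to Chevalley and to Shephard--Todd, and I would ultimately present it with the citations already given.
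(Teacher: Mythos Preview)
The paper does not prove this theorem at all: it is stated with citations to Shephard--Todd, Chevalley, and the textbook of Lehrer--Taylor, and then used as a black box. Your write-up is a correct and complete rendition of Chevalley's classical argument (Reynolds projection, the key syzygy lemma proved by degree induction using the divisibility $b-b\circ s\in(\ell_s)$ for reflections $s$, and the Euler-identity endgame). Since there is no proof in the paper to compare against, there is nothing to contrast; what you have written would stand perfectly well as a self-contained appendix if one were wanted, and otherwise the appropriate course is simply to cite the result as the paper does.
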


Recall that the ring of invariants of any finite group acting on $\C^n$ is finitely generated (\cite[Corollary 3.8]{lehrer_taylor}). By the theorem above, f.u.r.g.'s have the remarkable property that their ring of invariants is a polynomial algebra on exactly $n$ generators. For any choice of $n$ homogeneous and algebraically independent generators $P_1,\ldots, P_n$, we have the associated \emph{$G$-orbit map} $\pi(z)=\big(P_1(z),\ldots P_n(z)\big)$, which induces a bijection $\C^n/G\rightarrow \C^n$ (\cite[Proposition 9.3]{lehrer_taylor}). 

Recall from the introduction that $\mathcal R_G$ denotes the set of all the reflecting hyperplanes of $G$ and that for $Y\in \mathcal{R}_G$ we have the finite (cyclic) subgroup $G_Y$ of order $m_Y$. Finally, recall that if $Y\in \mathcal{R}_G$, then any vector $e$ of norm $1$ orthogonal to $Y$ is called a root of the reflection $r$.

\begin{prp}[{\cite[Theorem 9.8]{lehrer_taylor}}]\label{prp_Jacobian}
Choose a root $e_Y$ for every reflecting hyperplane $Y$ of $G$. Then the Jacobian of the orbit map $\pi=(P_1,\ldots,P_n):\mathbb C^n\to\mathbb C^n$ is given by 
\begin{equation*}
J(\pi)(z) = c_\pi\prod_{Y\in\mathcal R_G}\left\langle z, e_Y\right\rangle^{m_Y-1},
\end{equation*}
where $c_\pi\in\mathbb C$ is a nonzero constant depending on the choice of generators and $\left\langle\cdot, \cdot\right\rangle$ is the standard Hermitian product of $\C^n$. 
\end{prp}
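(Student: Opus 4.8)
The plan is to prove that $J(\pi)$ is a relative invariant of $G$ transforming by the character $g\mapsto \det(g)^{-1}$, to deduce from this alone that the product $\prod_{Y}\langle z,e_Y\rangle^{m_Y-1}$ divides $J(\pi)$, and finally to match homogeneous degrees so that the divisibility becomes an equality up to a nonzero constant.

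First I would record the transformation law of $J(\pi)$ under the group. Differentiating the invariance relation $P_i(g.z)=P_i(z)$ and using the chain rule gives the matrix identity $(DP)(g.z)\,g=(DP)(z)$, where $DP=(\partial P_i/\partial z_j)_{i,j}$ and $g\in U(n)$ is identified with its matrix. Taking determinants yields
\[
J(\pi)(g.z)\,\det(g)=J(\pi)(z)\qquad(g\in G),
\]
so that $J(\pi)(g.z)=\det(g)^{-1}J(\pi)(z)$; i.e. $J(\pi)$ is a semi-invariant with character $\det^{-1}$.

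Next I would exploit this one reflecting hyperplane at a time. Fix $Y\in\mathcal R_G$ and let $r$ generate the cyclic group $G_Y$, so that $r$ fixes $Y$ pointwise and acts on the line $\C e_Y$ as multiplication by a primitive $m_Y$-th root of unity $\zeta$; in particular $\det(r)=\zeta$ and $\langle r.z,e_Y\rangle=\zeta\langle z,e_Y\rangle$. Writing $z=\langle z,e_Y\rangle e_Y+z'$ with $z'\in Y$ (so that $z'$ is fixed by $r$) and expanding $J(\pi)=\sum_{k\geq0}\langle z,e_Y\rangle^k A_k(z')$, the transformation law with $g=r$ forces $\zeta^k A_k=\zeta^{-1}A_k$ coefficientwise, hence $A_k\equiv0$ unless $k\equiv m_Y-1\pmod{m_Y}$. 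Therefore $\langle z,e_Y\rangle^{m_Y-1}$ divides $J(\pi)$, and since distinct hyperplanes give pairwise non-proportional, hence coprime, linear forms in the unique factorization domain $\C[z_1,\ldots,z_n]$, the whole product $\prod_{Y\in\mathcal R_G}\langle z,e_Y\rangle^{m_Y-1}$ divides $J(\pi)$.

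It remains to compare degrees. As $P_i$ is homogeneous of degree $d_i:=\deg P_i$, the determinant $J(\pi)$ is homogeneous of degree $\sum_i(d_i-1)$, while the product above has degree $\sum_Y(m_Y-1)$, which equals the total number $N$ of reflections of $G$ (each $G_Y$ contributing exactly its $m_Y-1$ nontrivial elements, and every reflection fixing a unique hyperplane). The two degrees agree by the classical identity $\sum_i(d_i-1)=N$ for the degrees of a f.u.r.g.; and since $P_1,\ldots,P_n$ are algebraically independent, $J(\pi)\not\equiv0$ (equivalently, the quotient map $\pi$ is dominant, being surjective onto $\C^n$). Thus divisibility together with equality of degrees yields $J(\pi)=c_\pi\prod_{Y\in\mathcal R_G}\langle z,e_Y\rangle^{m_Y-1}$ with $c_\pi\neq0$. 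The chain-rule computation and the local expansion are routine; the only step drawing on nontrivial structure theory is the degree identity $\sum_i(d_i-1)=N$, which I would cite from \cite{lehrer_taylor}, as it belongs to the same circle of results (the theory of the degrees of a f.u.r.g.) as the proposition itself.
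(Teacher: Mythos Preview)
Your proof is correct; it is in fact the standard argument (semi-invariance of the Jacobian, local divisibility hyperplane by hyperplane, degree matching via $\sum_i(d_i-1)=N$). Note, however, that the paper does not supply its own proof of this proposition: it is stated with a direct citation to \cite[Theorem 9.8]{lehrer_taylor} and used as a black box, so there is nothing to compare against beyond observing that your argument is essentially the one in that reference.
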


We point out that, up to a multiplicative constant, the Jacobian of a $G$-orbit map coincides with the Jacobian polynomial defined in \eqref{J_G}. Moreover, the Jacobian polynomial $J_G(z)$ has the following remarkable property. 

\begin{prp}[{\cite[Lemma 9.10]{lehrer_taylor}}]\label{algebraic_division_lem}
Let $P\in \C[z_1,\ldots, z_n]$ be a skew polynomial, that is, \[
P(g.z)=\det(g)^{-1}P(z)\qquad \forall g\in G. 
\]
Then $J_G$ divides $P$. 
\end{prp}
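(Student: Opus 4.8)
The plan is to reduce the divisibility to a separate, purely one-dimensional statement attached to each reflecting hyperplane, and then to recombine these using unique factorization. Concretely, I would first prove that for every $Y\in\mathcal R_G$ the linear form $\langle z,e_Y\rangle$ divides $P$ to order at least $m_Y-1$, and then observe that the forms attached to distinct hyperplanes are pairwise non-proportional, hence pairwise coprime in the unique factorization domain $\C[z_1,\dots,z_n]$. Since each coprime factor $\langle z,e_Y\rangle^{m_Y-1}$ divides $P$, their product $J_G=\prod_{Y\in\mathcal R_G}\langle z,e_Y\rangle^{m_Y-1}$ divides $P$ as well, which is the assertion.

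For the local statement, fix $Y\in\mathcal R_G$ with multiplicity $m=m_Y$ and root $e=e_Y$. Recall from the structure theory that $G_Y$ is cyclic of order $m$; let $r$ be a generator, so that $r$ fixes $Y$ pointwise and acts on the line $\C e$ by a primitive $m$th root of unity $\zeta$. Since $r$ acts as the identity on the $(n-1)$-dimensional subspace $Y$, one has $\det(r)=\zeta$. I would then choose linear coordinates $(w_1,\dots,w_n)$ adapted to the orthogonal decomposition $\C^n=\C e\oplus Y$, with $w_1$ the coordinate along $e$ (so that $w_1=\langle z,e\rangle$ and $\{w_1=0\}=Y$) and $w_2,\dots,w_n$ coordinates on $Y$. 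In these coordinates $r$ acts by $(w_1,w_2,\dots,w_n)\mapsto(\zeta w_1,w_2,\dots,w_n)$.

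Expanding $P=\sum_{k\geq 0}c_k(w_2,\dots,w_n)\,w_1^k$ as a polynomial in $w_1$, the skew condition $P(r.z)=\det(r)^{-1}P(z)=\zeta^{-1}P(z)$ reads $\sum_k c_k\,\zeta^{k}w_1^{k}=\zeta^{-1}\sum_k c_k\,w_1^{k}$, and comparing coefficients of $w_1^k$ gives $\zeta^{k}c_k=\zeta^{-1}c_k$ for every $k$. Hence $c_k=0$ unless $\zeta^{k}=\zeta^{-1}$, that is, unless $k\equiv -1\pmod m$; the smallest nonnegative such exponent is $k=m-1$. Therefore $w_1^{m-1}=\langle z,e\rangle^{m-1}$ divides $P$, which is exactly the local claim. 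Assembling these pairwise coprime factors as described in the first paragraph completes the argument.

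The step I expect to require the most care is the coordinate computation: one must correctly identify the eigenvalue by which the generating reflection $r$ acts on the coordinate $w_1$ and verify that it agrees with $\det(r)$, so that the surviving exponents are precisely those congruent to $m-1$ modulo $m$ rather than some other residue class (which would yield the wrong power of the linear form). This hinges entirely on the structural fact, recalled in the previous section, that each $G_Y$ is cyclic of order $m_Y$ and generated by a reflection whose determinant is a primitive $m_Y$th root of unity; once this is in hand the remaining manipulations are routine.
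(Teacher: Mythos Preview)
The paper does not supply its own proof of this proposition: it is quoted verbatim from \cite[Lemma 9.10]{lehrer_taylor} and used as a black box. Your argument is correct and is in fact the standard one found in that reference---reduce to a single reflecting hyperplane via adapted coordinates, use the cyclic action of $G_Y$ to force all low-order coefficients in the normal variable to vanish, and then assemble the pairwise coprime linear factors by unique factorization---so there is nothing to compare beyond noting that you have reproduced the textbook proof.
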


Notice that in the skewness condition above $\det(g)^{-1}=\overline{\det(g)}$, because the group is unitary. 
\medskip

We conclude this section with the proof of Proposition \ref{rudin_criterion_0}, which is a straightforward generalization of the argument used by Rudin \cite{rudin_refl} for the case $E=B_n$, the unit ball of $\C^n$. 

\begin{proof}[Proof of Proposition \ref{rudin_criterion_0}]
The $G$-orbit map $\pi:\C^n\rightarrow \C^n$ separates $G$-orbits \cite[Theorem 3.5]{lehrer_taylor} (in fact, as we already pointed out above, $\pi$ induces a bijection of $\C^n/G$ onto $\C^n$). From this and the $G$-invariance of $E$ it follows that the sets $\pi(bE)$ and $\pi(E)$ are disjoint. Since $\pi(bE)$ is compact, the connected components of $\C^n\setminus \pi(bE)$ are open. Pick $z_0\in E$, and let $D$ be the connected component of $\C^n\setminus \pi(bE)$ containing $\pi(z_0)$. By connectedness, $\pi(E)\subseteq D$, and then $\pi(bE)\subseteq bD$ (because $\pi(bE)\subseteq \overline{D}$ by continuity). Thus, $\pi:E\rightarrow D$ is a proper holomorphic map and hence a ramified covering (see Remark \ref{rmk:proper}). The normality of the covering and the fact that the covering group is $G$ is an easy consequence of the fact that $\pi$ separates $G$-orbits.
\end{proof}

\section{Division Lemma}\label{section-division-lemma}

In this section we prove the first of the lemmas from which Theorem \ref{Lp_bdd} will follow. Let $G$ be a fixed f.u.r.g. We need the following standard terminology. If $E$ is a domain, we say that $f\in \mathcal{O}(E)$ can be analytically continued past a boundary point $p\in bE$ if there exists an open ball $B$ centered at $p$ such that $f$ extends to a holomorphic function on $E\cup B$. 

\begin{lem}[\textbf{Division Lemma}]\label{division_lemma}
Let $E$ be a good bounded $G$-invariant domain containing the origin and let $K(z,w)$ be its Bergman kernel. Then $K(z,\overline w)$ is a holomorphic function on \[\widetilde{E}=\{(z,\overline{w})\colon z,w\in E\}.\] Suppose that it can be analytically continued past any boundary point in the set  \[b\widetilde{E}\setminus\{(z,\overline z)\colon\ z\in bE\}.\] Then the averaged kernel \eqref{averaged_kernel} can be represented as
\[
K_G(z,\overline w)=J_G(z)\overline{J_G(\overline w)}M(z,w),
\]
where $M\in \mathcal{O}(\widetilde{E})$. Moreover, $M(z,w)$ can be analytically continued past any boundary point of $b\widetilde{E}\setminus\{(g.z,\overline z)\colon\ z\in bE,\, g\in G\}$. \end{lem}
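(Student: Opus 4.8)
The plan is to read off from the averaged kernel two \emph{skewness} relations in the two variables, use them to divide out the Jacobian polynomial, and only then transport the analytic-continuation hypothesis on $K$ to the quotient $M$. I would first record that, for an orthonormal basis $\{\phi_j\}$ of $A^2(E)$, the series $K(z,\overline w)=\sum_j\phi_j(z)\overline{\phi_j(w)}$ converges locally uniformly and is holomorphic on $\widetilde E$; since each summand $K(z,\overline{g.w})=\Phi(z,\overline g\,\overline w)$ is a holomorphic function of $(z,\overline w)$ on $\widetilde E$ (by $G$-invariance of $E$ and linearity of $g$), the averaged kernel $K_G(z,\overline w)$ is holomorphic on $\widetilde E$ as well. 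Next I would establish, for every $h\in G$,
\[
K_G(h.z,\overline w)=\det(h)^{-1}K_G(z,\overline w),\qquad K_G(z,\overline{h.w})=\det(h)\,K_G(z,\overline w),
\]
both obtained from the unitary invariance $K(h.z,h.w)=K(z,w)$ (a case of the biholomorphic transformation rule, using $|\det h|=1$) together with a reindexing $g\mapsto hg$, respectively $g\mapsto gh^{-1}$, of the defining average. I would also note the Hermitian symmetry $K_G(z,\overline w)=\overline{K_G(w,\overline z)}$, which lets me deduce every second-variable statement from the corresponding first-variable one.

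For the factorization (which, I would stress, holds \emph{unconditionally}, the continuation hypothesis being needed only afterwards), fix $w$ and regard $z\mapsto K_G(z,\overline w)$ as a holomorphic \emph{skew} function on $E$, i.e.\ one satisfying $K_G(h.z,\overline w)=\det(h)^{-1}K_G(z,\overline w)$. At a point lying on a single reflecting hyperplane $Y$, I would choose linear coordinates with $\langle z,e_Y\rangle=z_1$, write $G_Y=\langle r\rangle$ with $r.z=(\omega z_1,z')$ and $\det r=\omega$ a primitive $m_Y$-th root of unity, and expand $K_G=\sum_k z_1^k a_k(z',\overline w)$; skewness under $r$ forces $a_k\equiv 0$ unless $k\equiv -1\ (\mathrm{mod}\ m_Y)$, so the expansion begins at order $m_Y-1$ and $K_G(z,\overline w)/\langle z,e_Y\rangle^{m_Y-1}$ is holomorphic there, jointly in $(z,\overline w)$. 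Carrying this out at every smooth point of the arrangement $\{J_G(z)=0\}$ shows $N:=K_G/J_G(z)$ is holomorphic away from the singular locus of the arrangement (crossed with $\overline E$); as that locus has complex codimension $\geq 2$ — in $\C^2$ it is just $\{z=0\}$, which is interior — Hartogs' extension theorem (or, alternatively, Proposition \ref{algebraic_division_lem} applied to the homogeneous Taylor components of $K_G$ at such points) gives $N\in\mathcal O(\widetilde E)$. Running the same argument in the second variable, most economically through the Hermitian symmetry above, divides out the conjugate factor $\overline{J_G(\overline w)}$ and yields $M\in\mathcal O(\widetilde E)$ with $K_G(z,\overline w)=J_G(z)\overline{J_G(\overline w)}M(z,w)$. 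The good-ness of $E$ enters here to ensure that each section $Y\cap E$ is a connected hypersurface with $Y\cap bE=b_Y(Y\cap E)$, so that the local divisions are unobstructed and patch to a single-valued quotient.

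For the continuation of $M$, take a good boundary point $p=(z_0,\overline{w_0})\in b\widetilde E$, i.e.\ $p\notin\{(g.z,\overline z):z\in bE,\ g\in G\}$. I would first show $K_G$ continues past $p$: by hypothesis the summand $K(z,\overline{g.w})$ continues past $p$ unless $(z_0,\overline{g.w_0})$ is a diagonal boundary point, that is unless $z_0=g.w_0\in bE$, and the union over $g\in G$ of these obstruction sets is exactly $\{(g.z,\overline z):z\in bE,\ g\in G\}$; hence $K_G$ extends holomorphically to $\widetilde E\cup B$ for a ball $B$ about $p$. It then remains to divide $J_G(z)\overline{J_G(\overline w)}$ out of this extension near $p$, the factors not vanishing at $p$ being harmless. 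For a vanishing factor $\langle z,e_Y\rangle^{m_Y-1}$ with $z_0\in Y$, the decisive point is that every $r\in G_Y$ fixes $z_0$ pointwise, so $r.p=p$; consequently $K_G(r.z,\overline w)$ is holomorphic on the same ball $B$, it agrees with $\det(r)^{-1}K_G(z,\overline w)$ on the nonempty open set $\widetilde E\cap B$, and therefore the skewness relation persists on all of $B$ by the identity theorem. The transverse power-series division of the previous paragraph then applies verbatim on $B$, and the analogous argument in the second variable (using $w_0\in Y$ and the co-skewness) removes $\overline{J_G(\overline w)}$, so $M$ continues past $p$.

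The main difficulty I anticipate is the second step: converting pointwise/transverse divisibility into a genuine element of $\mathcal O(\widetilde E)$ — patching the local divisions across the whole reflecting arrangement and removing the strata where several hyperplanes meet — and, in the third step, verifying that analytic continuation of $K_G$ preserves the skewness relations (which is precisely what makes the fixed-hyperplane observation $r.z_0=z_0$ effective). In $\C^2$ the only singular point of the arrangement is the origin, which lies in the interior, so the codimension-two removal is immediate; in higher dimension the singular strata would force an iterated division governed by the pointwise stabilizers, where Proposition \ref{algebraic_division_lem} and good-ness become genuinely indispensable.
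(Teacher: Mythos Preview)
Your approach is correct (at least in dimension two, which is all the main theorem needs) but follows a genuinely different route from the paper. The paper packages both variables at once: it observes that $L(z,w):=K_G(z,\overline w)$ is skew on $\widetilde E$ with respect to the \emph{product} f.u.r.g.\ $G\times\overline G$, checks that $\widetilde E$ is good for this product group (Proposition \ref{prp_good}), and then applies a single general lemma (Proposition \ref{lem-continuatio}) asserting that a skew holomorphic function on a good domain is globally divisible by the Jacobian polynomial, with the quotient continuing wherever the function does. That proposition is proved by Taylor-expanding at the \emph{origin}, applying Proposition \ref{algebraic_division_lem} to each homogeneous component, and then propagating divisibility by each linear factor along the connected hyperplane section via Proposition \ref{prp_divisibility} --- so goodness is used in an essential way. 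Your argument instead divides locally at every smooth point of the arrangement and removes the codimension-two singular strata by the second Riemann extension theorem; for the continuation you exploit that $G_Y$ fixes any $z_0\in Y$ pointwise, so the skewness relation persists on the extension by the identity theorem. This is more hands-on and, in $\C^2$, apparently does not require goodness at all (your sentence invoking it for ``patching'' is unnecessary: the local quotients automatically agree with the globally defined meromorphic function $K_G/J_G$ on overlaps). The trade-off, which you correctly flag, is that for $n\ge3$ a boundary point may lie on several reflecting hyperplanes and your iterated division stalls because the intermediate quotients are no longer skew under the remaining $G_{Y'}$; the paper's origin-and-propagate method handles all dimensions uniformly.
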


The rest of this section is devoted to the proof of Lemma \ref{division_lemma}. It relies on three propositions. 

The first proposition is a well-known global divisibility property in the ring $\mathcal{O}(E)$, stated in the only special case that we need, that is, when the divisor function is a linear form. We include an elementary proof (not relying on nontrivial properties of the ring $\mathcal{O}(E)$) for the sake of completeness. 

\begin{prp}\label{prp_divisibility}
Let $E\subseteq\mathbb C^n$ be a domain, $f\in \mathcal{O}(E)$, and $\ell$ a linear form on $\C^n$. Assume that $E\cap\{\ell=0\}$ is connected and that $\ell|f$ on some open set $E'\subseteq E$ such that $E'\cap \{\ell=0\}$ is not empty. Then $\ell|f$ on $E$. 
\end{prp}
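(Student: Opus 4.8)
The plan is to reduce everything to division by a single coordinate and then exploit the identity principle on the connected hyperplane slice $E \cap \{\ell = 0\}$. First I would dispose of the trivial case $\ell \equiv 0$ and otherwise perform an invertible $\C$-linear change of coordinates so that $\ell(z) = z_1$; this preserves holomorphy of $f$, the connectedness of $E \cap \{\ell = 0\}$, and the local divisibility hypothesis, so there is no loss of generality. In these coordinates $\{\ell = 0\} = \{z_1 = 0\}$ is a smooth complex hyperplane. The key preliminary observation is that, for a holomorphic function on an open set $U$, divisibility by $z_1$ is equivalent to vanishing on $U \cap \{z_1 = 0\}$: if $f$ vanishes there, then expanding $f$ in a Taylor series in the $z_1$-direction about each point of $U \cap \{z_1 = 0\}$ shows that $f = z_1 h$ with $h \in \mathcal{O}(U)$, and the converse is immediate. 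Thus the hypothesis $\ell \mid f$ on $E'$ translates into $f \equiv 0$ on $E' \cap \{z_1 = 0\}$, and the desired conclusion $\ell \mid f$ on $E$ translates into $f \equiv 0$ on $E \cap \{z_1 = 0\}$.

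Next I would upgrade the vanishing from $E'$ to all of $E$. Consider the restriction $f|_{E \cap \{z_1 = 0\}}$, a holomorphic function on the $(n-1)$-dimensional complex manifold $E \cap \{z_1 = 0\}$, which is connected by hypothesis. Since $E'$ is open, $E' \cap \{z_1 = 0\}$ is a nonempty open subset of $E \cap \{z_1 = 0\}$ on which $f$ vanishes, so by the identity principle on a connected complex manifold $f$ vanishes on all of $E \cap \{z_1 = 0\}$. This is the only step where the connectedness assumption on $E \cap \{\ell = 0\}$ is used, and it is the heart of the argument.

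Finally, knowing $f \equiv 0$ on $E \cap \{z_1 = 0\}$, I would conclude by a local-to-global division. Near any point $p \in E \setminus \{z_1 = 0\}$ the quotient $f/z_1$ is trivially holomorphic, while near any $p \in E \cap \{z_1 = 0\}$ the vanishing of $f$ on the slice again yields, via the Taylor expansion in $z_1$, a holomorphic $h$ with $f = z_1 h$. These local quotients agree on the dense open set $\{z_1 \neq 0\}$, hence glue to a single $h \in \mathcal{O}(E)$ with $f = z_1 h$, that is, $\ell \mid f$ on $E$. The argument is elementary and invokes no nontrivial structure of the ring $\mathcal{O}(E)$; the only real (and mild) point to be careful about is the equivalence between divisibility by a coordinate and vanishing on the corresponding slice, which is exactly what makes the identity principle applicable.
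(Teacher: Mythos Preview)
Your proof is correct and, like the paper's, is entirely elementary: reduce to $\ell(z)=z_1$ by a linear change of variables, and exploit the connectedness of the slice $E\cap\{z_1=0\}$. The packaging, however, differs. You observe up front that on any open set $U$, divisibility $z_1\mid f$ is \emph{equivalent} to the vanishing of $f$ on $U\cap\{z_1=0\}$; this immediately reduces the proposition to the identity principle for the holomorphic restriction $f|_{E\cap\{z_1=0\}}$ on a connected $(n-1)$-dimensional slice, after which the local quotients $f/z_1$ glue over the dense set $\{z_1\neq0\}$. The paper instead forms the maximal subdomain $E'\subseteq E$ on which $z_1\mid f$ and argues by contradiction that $E'\cap\{z_1=0\}$ is closed (as well as open) in $E\cap\{z_1=0\}$: at a putative relative boundary point $p$ it shows, via vanishing of the tangential derivatives $\partial^\alpha f$ with $\alpha_1=0$ along a sequence $p_n\to p$, that $z_1\mid f$ in a ball around $p$, contradicting maximality. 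Your route is more direct and makes the role of the identity principle explicit; the paper's maximal-domain argument is the same connectedness mechanism in a slightly more hands-on disguise. Both arguments are equivalent in strength and neither invokes any nontrivial ring-theoretic structure of $\mathcal{O}(E)$.
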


Here, given $f,h\in \mathcal{O}(E)$ and an open subset $E'\subseteq E$, we say that $h$  divides $f$ on $E'\subseteq E$ (in short, $h|f$ on $E'$) if there exists $q\in \mathcal{O}(E')$ such that $f_{|E'}=qh_{|E'}$. 

\begin{rmk}
If $E\cap \{\ell=0\}$ is not connected, local divisibility does not necessarily propagate to global divisibility. For an example of this situation, one may consider the worm-like domain \[
E=\{(z_1,z_2)\in \C^2\colon\ \pi<|z_1|<5\pi,\ |z_2-e^{i|z_1|}|<1\}
\]
and the function $f$ obtained by analytic continuation of the germ $\log(z_2)$ at the point $(2\pi, 1)$ (here $\log$ is the principal determination of the logarithm), which vanishes on $\{z_2=1\}$ near $(2\pi, 1)$, but not near $(4\pi, 1)$. 
\end{rmk}

\begin{proof}[Proof of Proposition \ref{prp_divisibility}]
Without loss of generality, we may consider the case $\ell(z)=z_1$. We first notice that if $z_1|f$ on a family of domains $E_\alpha \subseteq E$, then $z_1|f$ on $\bigcup_\alpha E_\alpha$ (if $f=z_1 q_\alpha$ on $E_\alpha$ and $f=z_1 q_\beta$ on $E_\beta$, observe that $q_\alpha=q_\beta$ on $E_\alpha\cap E_\beta\cap \{z_1\neq 0\}$ and argue by continuity). Hence, taking the union of all the domains where the division $z_1|f$ holds we obtain the maximal domain $E'\subseteq E$ with respect to this property.

We claim that $E'=E$. Clearly $E\cap\{z_1\neq 0\}\subseteq E'$, so that to verify our claim it is enough to prove that $E'\cap\{z_1=0\}= E\cap\{z_1=0\}$. We argue by contradiction: if this is not the case, then there is a boundary point $p$ of $E'\cap\{z_1=0\}$ in the interior of $E\cap\{z_1=0\}$ (with respect to the relative topology of $\{z_1=0\}$). Choose a sequence $p_n\rightarrow p$ such that $p_n\in E'\cap \{z_1=0\}$. Since $f=z_1q$ on $E'$ for some $q\in \mathcal{O}(E')$, \[
\partial_z^\alpha f(p_n) = (z_1\partial_z^\alpha q)(p_n)=0\quad \forall n, \quad \forall \alpha\in\mathbb N^n\colon\ \alpha_1=0.
\]
Thus, by continuity,  $\partial^\alpha f(p)=0$ for every $\alpha$ as above. Considering now the Taylor series centered at $p$ of $f$ we immediately see that $z_1|f$ on a small ball $B$ centered at $p$, contradicting the maximality of $E'$.
\end{proof}

Our second proposition is a generalization of Proposition \ref{algebraic_division_lem} from polynomials to functions holomorphic on appropriate domains. 

\begin{prp}\label{lem-continuatio}
Let $E$ be a good bounded $G$-invariant domain containing the origin. Let $f\in \mathcal{O}(E)$ be a skew holomorphic function, i.e., \[
f(g.z)=\det(g)^{-1}f(z) \quad \forall z\in E,\, g\in G.
\]
Then $J_G|f$ on $E$. Moreover, if $f$ can be analytically continued past a boundary point, then the quotient $f/J_G$ can also be continued past that boundary point. 
\end{prp}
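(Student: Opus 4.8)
The plan is to reduce the global divisibility $J_G\mid f$ to a one-hyperplane-at-a-time statement, exploiting that $J_G=\prod_{Y\in\mathcal R_G}\ell_Y^{m_Y-1}$ with $\ell_Y(z)=\langle z,e_Y\rangle$, and that the linear forms $\ell_Y$ attached to distinct hyperplanes are pairwise non-proportional. Fix a reflecting hyperplane $Y$ and let $r$ generate the cyclic stabilizer $G_Y$; in unitary coordinates $(u,w)=(\langle z,e_Y\rangle,\ \text{component in }Y)$ the reflection acts as $(u,w)\mapsto(\zeta u,w)$, where $\zeta$ is a primitive $m_Y$-th root of unity and $\det(r)=\zeta$. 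Specializing the skewness hypothesis to $g=r$ gives the functional equation $f(\zeta u,w)=\zeta^{-1}f(u,w)$ near any point of $Y\cap E$, which is nonempty since it contains the origin. Expanding $f$ as a power series in $u$ with coefficients holomorphic in $w$, this equation annihilates every monomial $u^k$ except those with $k\equiv m_Y-1\pmod{m_Y}$; hence $\ell_Y^{m_Y-1}$ divides $f$ locally near $Y\cap E$.

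First I would globalize this to $\ell_Y^{m_Y-1}\mid f$ on all of $E$ by peeling off one factor of $\ell_Y$ at a time. From the local statement I get $\ell_Y\mid f$ near a point of the \emph{connected} set $Y\cap E$ (connectedness being part of the goodness hypothesis), and Proposition \ref{prp_divisibility} upgrades this to a global factorization $f=\ell_Y q_1$ with $q_1\in\mathcal O(E)$. The quotient $q_1$ again satisfies a functional equation under $r$, now with eigenvalue $\zeta^{-2}$, so the same power-series argument forces $\ell_Y\mid q_1$ locally, and Proposition \ref{prp_divisibility} globalizes once more. Iterating $m_Y-1$ times, the eigenvalue of the running quotient becomes trivial exactly at the last step, yielding $\ell_Y^{m_Y-1}\mid f$ globally.

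To assemble the per-hyperplane divisibilities into $J_G\mid f$, I would argue in the local rings $\mathcal O_{E,p}$, which are unique factorization domains. At each $p\in E$ the germs of the $\ell_Y$ vanishing at $p$ are pairwise non-associate primes, while the remaining $\ell_Y$ are units; since each $\ell_Y^{m_Y-1}$ divides $f$ globally, and hence as a germ, their product $J_G$ divides $f$ in $\mathcal O_{E,p}$. As this holds at every point, $f/J_G$ is a globally defined holomorphic function on $E$, i.e.\ $J_G\mid f$.

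For the analytic-continuation clause, suppose $f$ extends to $\tilde f\in\mathcal O(E\cup B)$ for a ball $B$ centered at a boundary point $p_0$. If $J_G(p_0)\neq0$ the claim is immediate. Otherwise the key observation is that every reflection $r\in G_Y$ with $p_0\in Y$ fixes $p_0$, hence, being unitary, maps $B$ onto itself, while preserving $E$ by $G$-invariance; thus each such $r$ is an automorphism of the connected set $E\cup B$, and the skewness identity, valid on $E$, propagates to $E\cup B$ by the identity theorem. Running the power-series argument at the center $p_0$ then gives $\ell_Y^{m_Y-1}\mid\tilde f$ near $p_0$ for every hyperplane $Y$ through $p_0$; the factors with $Y\not\ni p_0$ are units near $p_0$, so the local-UFD combination yields $J_G\mid\tilde f$ near $p_0$, whence $f/J_G=\tilde f/J_G$ continues holomorphically past $p_0$. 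The step I expect to require the most care is the iteration producing the full power $\ell_Y^{m_Y-1}$ globally---tracking the changing eigenvalue of the quotient under $r$ and re-justifying local divisibility at each stage---together with the clean passage from per-hyperplane divisibility to divisibility by the product $J_G$.
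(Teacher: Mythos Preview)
Your proof is correct and follows a route that differs from the paper's in two substantive ways.

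For the global divisibility $J_G\mid f$ on $E$, the paper works entirely at the origin: it expands $f$ into homogeneous components $f_m$, observes that each $f_m$ is a skew \emph{polynomial}, and invokes Proposition~\ref{algebraic_division_lem} to get $J_G\mid f_m$ in one stroke. From this, peeling off one linear factor $\ell$ at a time via Proposition~\ref{prp_divisibility} is immediate, because the Taylor components of the running quotient are $f_{m+1}/\ell$, which are still divisible by $J_G/\ell$. You instead bypass Proposition~\ref{algebraic_division_lem} by exploiting the cyclic structure of each $G_Y$ directly through the eigenvalue-tracking functional equation, and you assemble the per-hyperplane divisibilities $\ell_Y^{m_Y-1}\mid f$ into $J_G\mid f$ via the UFD property of the local rings $\mathcal O_{E,p}$ rather than by further iteration. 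Your approach is more elementary and self-contained, at the cost of some bookkeeping; the paper's is shorter once the algebraic lemma is in hand.

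For the analytic-continuation clause the difference is more interesting. The paper enlarges the domain to $E_1=E\cup B$ and \emph{reruns the global argument} on $E_1$, which forces it to verify that $E_1\cap Y$ is connected for every $Y\in\mathcal R_G$; this is precisely where the boundary half of the goodness hypothesis, $Y\cap bE=b_Y(Y\cap E)$, is used. Your argument stays purely local at $p_0$: since every reflection $r\in G_Y$ with $p_0\in Y$ fixes $p_0$ and is unitary, it preserves the ball $B$ and hence $E\cup B$, so the skewness identity for $r$ propagates by the identity theorem and the power-series argument runs directly at $p_0$. This neatly sidesteps the boundary condition in goodness, using only the connectedness of $Y\cap E$ from the first part.
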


\begin{proof}
Let $B$ a ball centered at $0$ where $f$ has Taylor expansion $\sum_m f_m$, with $f_m$ a homogeneous polynomial of degree $m$.
Since $f$ is skew, then every $f_m$ is skew, and hence $J_G|f_m$ by Lemma \ref{algebraic_division_lem}. Let $\ell$ be a linear factor of $J_G$, i.e., a linear form defining one of the reflecting hyperplanes. Since $\ell| f_m$ for every $m$, $\ell|f$ on $B$ (this is easily seen changing variables linearly so that $\ell(z)=z_1$). By Proposition \ref{prp_divisibility}, $\ell|f$ on the whole domain $E$. If $\ell'$ is another linear factor (possibly equal to $\ell$) of $J_G$, we may iterate this argument to conclude that $\ell'|\frac{f}{\ell}$. Continuing this way, we get that $J_G|f$ on $E$. 

To prove the second part of the statement we argue as follows. Let $p\in bE$ and let $B$ be a ball centered at $p$ where $f$ can be analytically continued. Choosing $B$ small enough, we may assume that $B$ intersects only those $Y\in \RH_G$ where $p$ lies. Consider the domain $E_1= E\cup B$ and notice that $E_1\cap Y$ is connected for every $Y\in \RH_G$, as the union of two connected sets with nonempty intersection (here one uses the fact that $E$ is good). Thus, applying the argument of the first part of the lemma to the domain $E_1$, we conclude  that $J_G|f$ on $E_1$. Notice that the fact that $E_1$ is not $G$-invariant is not a problem, since all we used
is the skewness of $f$ on a small ball centered at the origin (alternatively, one may enlarge $E_1$ saturating it w.r.t. the $G$-action).
\end{proof} 

The last ingredient we need is the next elementary proposition. 

\begin{prp}\label{prp_good}
Let $G_i\subseteq U(n_i)$ be a f.u.r.g.\ and $E_i\subseteq \C^{n_i}$ a good bounded $G_i$-invariant domain ($i=1,2$). Then, $E_1\times E_2\subseteq \C^{n_1}\times \C^{n_2}$ is a good $(G_1\times G_2)$-invariant domain. Here $G_1\times G_2=\{g_1\oplus g_2\colon\ g_i\in G_i\ (i=1,2)\}\subseteq U(n_1+n_2)$. 
\end{prp}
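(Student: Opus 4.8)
The plan is to verify the two defining properties of a good domain for $E_1\times E_2$ with respect to the product group $G_1\times G_2$. The first observation I would make is an explicit description of the reflecting hyperplanes of $G=G_1\times G_2$. A reflection in $G$ is an element $g_1\oplus g_2$ of finite order fixing a hyperplane in $\C^{n_1+n_2}$; since the two factors act on complementary coordinate subspaces, such an element must be either $r_1\oplus \id_{n_2}$ with $r_1$ a reflection of $G_1$, or $\id_{n_1}\oplus r_2$ with $r_2$ a reflection of $G_2$. Consequently the reflecting hyperplanes split as
\[
\RH_G=\{Y\times \C^{n_2}\colon Y\in \RH_{G_1}\}\cup \{\C^{n_1}\times Z\colon Z\in \RH_{G_2}\},
\]
and I would record this decomposition first, since every subsequent verification reduces to one of these two cases by symmetry.

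Next, fix a reflecting hyperplane of $G$, say of the first type, $\widetilde Y=Y\times \C^{n_2}$ with $Y\in\RH_{G_1}$. Then $\widetilde Y\cap (E_1\times E_2)=(Y\cap E_1)\times E_2$. Connectedness is immediate: $Y\cap E_1$ is connected because $E_1$ is good, $E_2$ is connected because it is a domain, and a product of connected sets is connected. For the boundary condition I would use the elementary fact that for a product of open sets one has $b(A\times B)=(bA\times \overline B)\cup(\overline A\times bB)$, applied within the ambient hyperplane $\widetilde Y\cong Y\times \C^{n_2}$. Relative to $\widetilde Y$, the boundary of $(Y\cap E_1)\times E_2$ is
\[
\big(b_Y(Y\cap E_1)\times \overline{E_2}\big)\cup\big(\overline{Y\cap E_1}\times bE_2\big),
\]
whereas $\widetilde Y\cap b(E_1\times E_2)=(Y\times \C^{n_2})\cap\big((bE_1\times \overline{E_2})\cup(\overline{E_1}\times bE_2)\big)$, which simplifies to $\big((Y\cap bE_1)\times \overline{E_2}\big)\cup\big((Y\cap \overline{E_1})\times bE_2\big)$. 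Comparing the two expressions term by term, the goodness of $E_1$ gives $Y\cap bE_1=b_Y(Y\cap E_1)$, which handles the first term, and $Y\cap\overline{E_1}=\overline{Y\cap E_1}$ handles the second; this last equality also follows readily once one knows $Y\cap E_1$ is relatively dense in $Y\cap\overline{E_1}$, a consequence of goodness.

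The case of a hyperplane of the second type, $\C^{n_1}\times Z$ with $Z\in\RH_{G_2}$, is identical after interchanging the roles of the factors, so I would dispose of it by symmetry. Finally, the $(G_1\times G_2)$-invariance of $E_1\times E_2$ is trivial, since $(g_1\oplus g_2).(z_1,z_2)=(g_1.z_1,g_2.z_2)\in E_1\times E_2$ whenever each $E_i$ is $G_i$-invariant. I expect the only genuinely delicate point to be the bookkeeping in the boundary identity: one must be careful to take boundaries relative to the correct ambient hyperplane rather than in all of $\C^{n_1+n_2}$, and to invoke the precise equality $Y\cap bE_1=b_Y(Y\cap E_1)$ from the definition of goodness rather than a mere inclusion. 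Everything else is a routine manipulation of products of connected and open sets.
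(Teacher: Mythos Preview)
Your proposal is correct and follows essentially the same approach as the paper: identify the reflecting hyperplanes of $G_1\times G_2$ as products $Y\times\C^{n_2}$ or $\C^{n_1}\times Z$, check connectedness trivially, and verify the boundary identity using the goodness of the factors. The only difference is organizational: the paper argues pointwise (fix $(z,w)\in b(E_1\times E_2)$, split into the cases $z\in bE_1$ or $w\in bE_2$, then into the two hyperplane types), whereas you compute both sides of the boundary identity set-theoretically via the product-boundary formula and compare terms; your consequence $Y\cap\overline{E_1}=\overline{Y\cap E_1}$ from goodness is exactly what the paper uses implicitly in its second subcase.
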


\begin{proof}
The family of reflecting hyperplanes $\mathcal R_{G_1\times G_2}$ consists of 
\[
Y_1\oplus \C^{n_2}, \quad \C^{n_1}\oplus Y_2,
\] 
as $Y_1$ and $Y_2$ vary in $\mathcal R_{G_1}$ and $\mathcal R_{G_2}$ respectively. The connectedness of $Y\cap (E_1\times E_2)$ for every $Y\in \mathcal{R}_{G_1\times G_2}$ is clear. 

If $(z,w)\in \C^{n_1}\times \C^{n_2}$ is a boundary point of $E_1\times E_2$, then either $z\in bE_1$ and $w\in \overline{E_2}$ or $z\in \overline{E_1}$ and $w\in bE_2$. Of course, it is enough to consider the first case. If $(z,w)\in Y\in\mathcal R_{G_1\times G_2}$, then either $z\in Y_1$ for some $Y_1\in \mathcal{R}_{G_1}$ or $w\in Y_2$ for some $Y_2\in \mathcal{R}_{G_2}$. In the first case, since $E_1$ is good and $z\in bE_1\cap Y_1$, $z$ is a boundary point of $Y_1\cap E_1$. As a consequence, $(z,w)$ is in the boundary of $(Y_1\oplus \C^n)\cap (E_1\times E_2)=(Y_1\cap E_1)\times E_2$ as we wanted to show. Assume now that $w\in Y_2$. Either $w\in E_2$, or $w\in bE_2$, and, since $E_2$ is good, in any case $w\in \overline{Y_2\cap E_2}$. Hence, $(z,w)$ is in the boundary of $(\C^n\oplus Y_2)\cap (E_1\times E_2)= E_1\times(E_2\cap Y_2)$.\end{proof}

\begin{proof}[Proof of Lemma \ref{division_lemma}]

Let $E$ be a good bounded $G$-invariant domain containing the origin with Bergman kernel $K$. Recall that \[
\widetilde{E}=\{(z,\overline{w})\colon z,w\in E\}\subseteq\C^n\times \C^n
\] and that we are assuming that the kernel $K(z,\overline{w})$ can be analytically continued past any point of $b\widetilde{E}\setminus \{(z,\overline z)\colon\ z\in bE\}$. The domain $\widetilde{E}$ is invariant under the action of the unitary group \[
G\times \overline{G}=\{g\oplus \overline{h}\colon g,h\in G\} \subseteq U(2n),
\] where $\overline{h}$ is the matrix obtained conjugating componentwise $h$. 

One may easily verify that $\overline{G}=\{\overline{h}\colon\ h\in G\}$ is a f.u.r.g. whose Jacobian polynomial is $J_{\overline{G}}(z)=\overline{J_G(\overline{z})}$. Since the Jacobian polynomial of a product of f.u.r.g.'s is the tensor product of their Jacobian polynomials, we have $J_{G\times \overline{G}}(z,w)=J_G(z)\overline{J_G(\overline{w})}$. 

By Proposition \ref{prp_good}, the domain $\widetilde{E}$ is good (because the "conjugated" domain $\{\overline{w}\colon\ w\in E\}$ is good). The function
\[
L(z,w):=K_G(z,\overline{w})=\frac{1}{|G|^2}\sum_{g,h\in G}\det(g)\overline{\det(h)}K(g.z, h.\overline{w})
\] is holomorphic and skew on $\widetilde{E}$. Indeed,
\begin{eqnarray*}
L(g'.z,\overline{h'}.w)&=&K_G(g'.z,h'.\overline{w})=\frac{1}{|G|^2}\sum_{g,h\in G}\det(g)\overline{\det(h)}K(gg'.z, hh'.\overline{w})\\
&=&\overline{\det(g')}\det(h')\frac{1}{|G|^2}\sum_{g,h\in G}\det(g)\overline{\det(h)}K(g.z, h.\overline{w})\\
&=&\det(g'\oplus \overline{h'})^{-1}L(z,w).
\end{eqnarray*}
So, by Proposition \ref{lem-continuatio}, the Jacobian polynomial of the group $G\times \overline{G}$ divides $L$, i.e., there exists $M\in \mathcal{O}(\widetilde{E})$ such that \[
L(z,w)=J_G(z)\overline{J_G(\overline{w})} M(z,w),
\]
that is \[
K_G(z,w)=J_G(z)\overline{J_G(w)} M(z,\overline{w}).
\]

By definition \eqref{averaged_kernel}, $L(z,w)$ can be analytically continued past every point of $b\widetilde{E}\setminus \{(g.z,\overline{z})\colon \ z\in bE,\, g\in G\}$, Lemma \ref{lem-continuatio} guarantees that $M(z,w)$ can be analytically continued past every such point too. The proof of Lemma \ref{division_lemma} is now complete.
\end{proof}

\section{Normal Subgroup Lemma}\label{section-normal-lemma}

In this section we discuss our second key lemma. As above, fix a f.u.r.g. $G$. 

\begin{lem}[\textbf{Normal Subgroup Lemma}]\label{normal_subgroup_lemma}

Let $\mathcal S\subseteq \mathcal R_G$ be a $G$-invariant set of reflecting hyperplanes of $G$, let $H$ be the normal reflection subgroup of $G$ generated by $\mathcal S$ (as described in Section \ref{section_main_thm})  and assume that $\mathcal R_{H}=\mathcal S$ (cf. Remark \ref{RGS=S}). For $\delta>0$, we define
\begin{equation}\label{good_region}
E(\mathcal{S},\delta)=\left\{(z,w)\in E\times E\colon \ d(z,Y),\ d(w,Y) \geq \delta\quad\forall Y\notin \mathcal{S}\right\}.
\end{equation}
Here $d(\cdot, Y)$ is the Euclidean distance from $Y$. Given $p\in (1,+\infty)$, let\[
K_{G,p}(z,w) = |J_G(z)|^{\frac{2}{p}-1}K_G(z,w)|J_G(w)|^{1-\frac{2}{p}}.
\]
Define $K_{H,p}$ similarly in terms of the averaged kernel $K_H$ and the Jacobian polynomial $J_H$ of the subgroup $H$. Then there exists a positive constant $C=C(\delta, p)$ such that for every $(z,w)\in E(\mathcal{S},\delta)$ we have \[
|\Kw(z,w)|\leq  \frac{C}{|G|}\sum_{g\in G}|K_{H,p}(g.z,w)|.
\]
\end{lem}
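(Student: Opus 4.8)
The plan is to reduce the averaged kernel $K_G$ to the averaged kernel $K_H$ of the normal subgroup by decomposing $G$ into cosets of $H$, then to keep track of how the weights $|J_G|$ and $|J_H|$ interact, and finally to absorb the ``extra'' Jacobian factors into a constant using the geometry of $E(\mathcal S,\delta)$. For the reduction, recall that every $g\in G$ is linear, so $J(g)=\det(g)$ is a constant of modulus one, and the transformation rule for the Bergman kernel under the unitary automorphism $s\in G$ of the $G$-invariant domain $E$ gives the invariance $K(s.a,s.b)=K(a,b)$, whence $K(a,s.b)=K(s^{-1}.a,b)$. Fixing a transversal $T$ for the (left) cosets of $H$ in $G$, so that $G=\bigsqcup_{t\in T}tH$, and inserting this decomposition into $K_G(z,w)=\frac{1}{|G|}\sum_{g\in G}K(z,g.w)\overline{\det(g)}$, the invariance just noted collapses the inner sum over $H$ into a copy of $K_H$, yielding the identity
\[
K_G(z,w)=\frac{1}{[G:H]}\sum_{s\in T'}\det(s)\,K_H(s.z,w),
\]
where $T'=\{t^{-1}:t\in T\}$ is again a transversal and I use $\det(s)^{-1}=\overline{\det(s)}$ for unitary $s$.

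Next I would handle the weights. Set $\Phi:=J_G/J_H=\prod_{Y\in\mathcal R_G\setminus\mathcal S}\langle\cdot,e_Y\rangle^{m_Y-1}$; this factorization uses the hypothesis $\mathcal R_H=\mathcal S$ together with the fact that the multiplicities of the hyperplanes in $\mathcal S$ agree in $G$ and in $H$, since $H$ contains every reflection of $G$ with hyperplane in $\mathcal S$. The crucial point is that $|J_H(s.z)|=|J_H(z)|$ for every $s\in G$: indeed $\mathcal S=\mathcal R_H$ is $G$-invariant and, by normality of $H$, conjugation by $s$ preserves the multiplicities $m_Y$, so $|J_H(s.z)|=\prod_{Y\in\mathcal S}d(z,s^{-1}.Y)^{m_Y-1}$ is merely a reindexing of $|J_H(z)|$. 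Rewriting each $K_H(s.z,w)$ above in terms of $K_{H,p}(s.z,w)$ and using $|J_H(s.z)|=|J_H(z)|$, the $z$- and $w$-weights combine into the single prefactor $|\Phi(z)|^{\frac2p-1}|\Phi(w)|^{1-\frac2p}$, giving
\[
|K_{G,p}(z,w)|\le\frac{|\Phi(z)|^{\frac2p-1}|\Phi(w)|^{1-\frac2p}}{[G:H]}\sum_{s\in T'}|K_{H,p}(s.z,w)|.
\]

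It then remains to invoke the geometry and conclude. On $E(\mathcal S,\delta)$ one has $|\langle z,e_Y\rangle|=d(z,Y)\ge\delta$ for every $Y\notin\mathcal S$, while $z\in E$ bounded gives an upper bound; hence $|\Phi(z)|$ and $|\Phi(w)|$ lie between two positive constants depending only on $\delta$, and the prefactor is $\le C(\delta,p)$ irrespective of the sign of $\frac2p-1$. Finally, since $|K_{H,p}(h.z,w)|=|K_{H,p}(z,w)|$ for $h\in H$ (because $|J_H(h.z)|=|J_H(z)|$ and $|K_H(h.z,w)|=|K_H(z,w)|$ by the $H$-semi-invariance $K_H(h.z,w)=\det(h)^{-1}K_H(z,w)$), each coset contributes $|H|$ equal terms, so $\sum_{s\in T'}|K_{H,p}(s.z,w)|=\frac{1}{|H|}\sum_{g\in G}|K_{H,p}(g.z,w)|$. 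Combining this with $[G:H]\cdot|H|=|G|$ produces exactly the asserted bound with $C=C(\delta,p)$.

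I expect the main obstacle to be the bookkeeping in the second step, in particular the verification of the $G$-invariance $|J_H(s.z)|=|J_H(z)|$ for arbitrary $s\in G$ (not just $s\in H$), which is precisely where the normality of $H$ and the $G$-invariance of $\mathcal S=\mathcal R_H$ are used, together with the careful check that the weight exponents recombine into $|\Phi|^{\frac2p-1}$ and $|\Phi|^{1-\frac2p}$. By contrast, the coset reduction of the first step is a routine consequence of the unitary invariance of the Bergman kernel, and the uniform two-sided control of $|\Phi|$ on the good region in the third step is elementary.
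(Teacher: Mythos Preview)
Your proof is correct and follows essentially the same approach as the paper's: both rest on the averaging identity expressing $K_G$ in terms of $K_H$ (the paper writes it directly as $K_G(z,w)=\frac{1}{|G|}\sum_{g\in G}\det(g)\,K_H(g.z,w)$, while you pass through a transversal and recombine at the end, which is equivalent), the $G$-invariance $|J_H(g.z)|=|J_H(z)|$ coming from normality of $H$, and the two-sided comparability $|J_G|\asymp|J_H|$ on $E(\mathcal S,\delta)$. Your packaging of the latter via the quotient $\Phi=J_G/J_H$ is a tidy way to handle both signs of $\tfrac{2}{p}-1$ at once, but the argument is otherwise the paper's.
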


\begin{proof} The proof is based on two identities. A standard averaging argument gives the first one: \begin{equation}\label{averaging}
K_G(z,w)=\frac{1}{|G|}\sum_{g\in G} K_H(g.z,w)\det g\quad \forall z,w\in E.
\end{equation}
In fact, \eqref{averaging} holds for every subgroup $H$, not necessarily normal. 

Next, observe that the multiplicities $m_Y$ of $Y\in \mathcal{R}_H$ as a reflecting hyperplane of $G$ and $H$ are the same. The second identity states that for every $g\in G$ we have
\begin{equation}\label{J_invariance}
|J_H(g.z)|=|J_H(z)| \quad \forall z\in\mathbb C^n.
\end{equation}
This follows from the definition \eqref{J_G} and the easily checked fact that, since $g\in G$ normalizes $H$, the map $Y\mapsto g.Y$ is a permutation of $\mathcal{R}_H$ preserving the multiplicities $m_Y$ (recall that since $H$ is normal, $\mathcal R_H$ is $G$-invariant). The modulus is needed because of the nonuniqueness of the roots. 

From the considerations above, we see that there is $c=c(\delta)>0$ such that, if $d(z,Y)=|\left\langle z, e_Y\right\rangle|\geq \delta$ for every $Y\notin \mathcal{R}_H$, then
\begin{align*}
    |J_G(z)|&=\big|\prod_{Y\in\mathcal R_G}\left\langle z, e_Y\right\rangle^{m_Y-1}\big|\geq c \big|\prod_{Y\in\mathcal{R}_H}\left\langle z, e_Y\right\rangle^{m_Y-1}\big|    =|J_H(z)|.
\end{align*}
Since $E$ is a bounded domain we also have the converse inequality $|J_G(z)|\leq C' |J_H(z)|$ for all $z\in E$ (where $C'<+\infty$). Therefore, for all $(z,w)\in E(\mathcal{S},\delta)$, we have
\begin{eqnarray*}
|\Kw(z,w)|&=&|J_G(z)|^{\frac{2}{p}-1}|J_G(w)|^{1-\frac{2}{p}}|K_G(z,w)|\\
&\leq& C(\delta,p) |J_H(z)|^{\frac{2}{p}-1}|J_H(w)|^{1-\frac{2}{p}}|K_G(z,w)|\\
&=&C(\delta,p)|J_H(w)|^{1-\frac{2}{p}}\frac{1}{|G|}\Big|\sum_{g\in G}\det(g)\,|J_H(z)|^{\frac{2}{p}-1}K_H(g.z,w)\Big|\\
&=&C(\delta,p)|J_H(w)|^{1-\frac{2}{p}}\frac{1}{|G|}\Big|\sum_{g\in G}\det(g)\,|J_H(g.z)|^{\frac{2}{p}-1}K_H(g.z,w)\Big|\\
&\leq &\frac{C(\delta,p)}{|G|}\sum_{g\in G}\,|K_{H,p}(g.z,w)|,
\end{eqnarray*}
where we used the two identities \eqref{averaging} and \eqref{J_invariance}.\end{proof}

\section{A covering lemma in \texorpdfstring{$\mathbb C^2$}{\space} }\label{section-covering}

The Division Lemma and the Normal Subgroup Lemma of the previous sections hold for general f.u.r.g.'s. Our third and last lemma is restricted to the two-dimensional setting.

\begin{lem}[\textbf{Covering Lemma}]\label{bad_good_lemma} Let $G\subseteq U(2)$ be a f.u.r.g., and let
\[
\RH_G=\mathcal{S}_1
\cup\mathcal{S}_2\] be a partition of $\mathcal{R}_G$ into $G$-invariant subsets. Let $E\subseteq \C^n$ be a bounded $G$-invariant domain containing the origin. Let $E(\mathcal{S}_j,\delta)$ ($j=1,2$) be the regions defined in \eqref{good_region}, and let \[
E_{\mathrm{reg}}(\delta):=\{(z,w)\in E\times E\colon \ d(z,bE)+d(w,bE)+|g.z-w|\geq \delta\quad \forall g\in G\}.
\]
Then there exists $\delta>0$ such that  
\[ E\times E=E(\mathcal{S}_1,\delta)\cup E(\mathcal{S}_2,\delta)\cup E_{\mathrm{reg}}(\delta).
\]
\end{lem}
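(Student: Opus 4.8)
The plan is to argue by contradiction using a compactness argument, exploiting the finiteness of $G$ and the boundedness of $E$. Suppose no such $\delta$ exists. Taking $\delta = 1/k$ for each positive integer $k$, there is then a point $(z_k, w_k) \in E \times E$ lying in none of the three regions $E(\mathcal{S}_1, 1/k)$, $E(\mathcal{S}_2, 1/k)$, $E_{\mathrm{reg}}(1/k)$. Unraveling the definitions, the three failures produce, for each $k$: a hyperplane $B_k \in \mathcal{S}_2$ (since $\mathcal{R}_G \setminus \mathcal{S}_1 = \mathcal{S}_2$) with $\min(d(z_k, B_k), d(w_k, B_k)) < 1/k$; a hyperplane $A_k \in \mathcal{S}_1$ with $\min(d(z_k, A_k), d(w_k, A_k)) < 1/k$; and an element $g_k \in G$ with $d(z_k, bE) + d(w_k, bE) + |g_k.z_k - w_k| < 1/k$. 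Since $G$ is finite, there are only finitely many reflecting hyperplanes and group elements, so after passing to a subsequence I may assume $A_k \equiv A \in \mathcal{S}_1$, $B_k \equiv B \in \mathcal{S}_2$, $g_k \equiv g$ are constant, and that in each of the two ``$\min$'' conditions the same alternative (the one on $z$ or the one on $w$) is realized for every $k$. As $\overline E$ is compact, a further subsequence gives $z_k \to z_*$ and $w_k \to w_*$ with $z_*, w_* \in \overline E$.

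First I would extract the consequences of the $E_{\mathrm{reg}}$ failure. Letting $k \to \infty$ in $d(z_k, bE) + d(w_k, bE) + |g.z_k - w_k| < 1/k$, and using continuity of $z \mapsto d(z, bE)$, of the linear map $g$, and of the norm, I obtain $d(z_*, bE) = d(w_*, bE) = 0$ and $g.z_* = w_*$. Since $bE$ is closed, this forces $z_*, w_* \in bE$ together with the relation $w_* = g.z_*$.

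Next I would convert the two hyperplane conditions into statements about $z_*$ alone, using the $G$-invariance of $\mathcal{S}_1$ and $\mathcal{S}_2$. From the $\mathcal{S}_2$-condition: if the surviving alternative is $d(z_k, B) < 1/k$, then $z_* \in B \in \mathcal{S}_2$; if instead $d(w_k, B) < 1/k$, then $w_* \in B$, and since $w_* = g.z_*$ this yields $z_* \in g^{-1}.B$, a hyperplane which again lies in $\mathcal{S}_2$ by $G$-invariance. Either way $z_*$ lies on some hyperplane of $\mathcal{S}_2$. The identical argument applied to the $\mathcal{S}_1$-condition shows $z_*$ lies on some hyperplane of $\mathcal{S}_1$.

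Finally I would invoke the two-dimensional geometry. The two hyperplanes just produced lie in $\mathcal{S}_1$ and $\mathcal{S}_2$ respectively, hence are distinct because $\mathcal{S}_1 \cap \mathcal{S}_2 = \emptyset$. In $\mathbb{C}^2$ each reflecting hyperplane is a complex line through the origin, and two distinct such lines meet only at $0$; therefore $z_* = 0$. But $z_* \in bE$, whereas $0 \in E$ and $E$ is open, so $0 \notin bE$ — a contradiction, which proves the lemma. The hard part is precisely this last step: the argument crucially uses that in $\mathbb{C}^2$ two distinct reflecting hyperplanes intersect solely at the origin (this is exactly where the hypothesis $G \subseteq U(2)$ enters, and why the statement is genuinely two-dimensional), combined with the assumption $0 \in E$, which guarantees that the origin cannot be a boundary point. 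Everything else is routine bookkeeping with subsequences, made possible by the finiteness of $G$, which lets one freeze all the discrete data.
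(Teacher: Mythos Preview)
Your proof is correct and follows essentially the same ``compactness and contradiction'' argument as the paper: pass to a subsequence to freeze the discrete data (the hyperplanes and the group element), deduce that the limit point $z_*$ lies on the boundary and on two distinct reflecting lines, and conclude $z_*=0\in bE$, a contradiction. The only cosmetic difference is that you spell out more explicitly the bookkeeping with the ``min'' alternatives and the reason $0\notin bE$.
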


\begin{proof} We argue by “compactness and contradiction''.
Assume that such a $\delta$ does not exist. Then, for any fixed sequence $\delta_k\rightarrow 0$, there exists a sequence $\{(z_k,w_k)\}_k$ such that the following properties hold:
\begin{itemize}
\item $(z_k,w_k)\rightarrow (z,w)\in \overline{E}\times \overline{E}$ 
\item $(z_k,w_k)\notin E(\mathcal{S}_1,\delta_k)\cup E(\mathcal{S}_2,\delta_k)\cup E_{\mathrm{reg}}(\delta_k)$ for every $k$. 
\end{itemize}

Since $(z_k,w_k)\notin E_{\mathrm{reg}}(\delta_k)$, we have $(z,w)\in bE\times bE$ and, since $G$ is finite, we may assume that $g.z=w$ for some $g\in G$. 

Moreover, by the condition $(z_k,w_k)\notin E(\mathcal{S}_1,\delta_k)$ and the finiteness of $\mathcal{R}_G$, we may assume that there is $Y'\in \mathcal{S}_2$ such that either $d(z_k, Y')<\delta_k$ for all $k$ or $d(w_k, Y')<\delta_k$ for all $k$. Hence, either $z\in Y'$ or $w\in Y'$. Using $g.z=w$ and the $G$-invariance of $\mathcal{S}_2$ (since $H_2$ is normal in $G$), we find $Y\in \mathcal{S}_2$ such that $z\in Y$. Similarly, by the condition $(z_k,w_k)\notin E(\mathcal{S}_2,\delta_k)$, we may find $Z\in \mathcal{S}_1$ such that $z\in Z$. Since the two distinct lines $Y$ and $Z$ intersect only at the origin, this contradicts the fact that $z\in bE$. 
\end{proof}

\section{Proof of the Main Theorem}\label{section-main}

We finally come to the proof of Theorem \ref{Lp_bdd}. By the Covering Lemma \ref{bad_good_lemma}, there exists $\delta>0$ such that \[
E\times E = E(\mathcal{S}_1,\delta)\cup E(\mathcal{S}_2, \delta)\cup E_{\mathrm{reg}}(\delta). 
\]

Since $\mathcal{R}_{G_j}=\mathcal{S}_j$ ($j=1,2$), the Normal Subgroup Lemma \ref{normal_subgroup_lemma} allows to bound $K_{G,p}(z,w)$ on $E(\mathcal{S}_1,\delta)$ and $E(\mathcal{S}_2,\delta)$: \[
|\Kw(z,w)|\leq  \frac{C(\delta, p)}{|G|}\sum_{g\in G}|K_{G_j,p}(g.z,w)| \qquad \forall (z,w)\in E(\mathcal{S}_j,\delta),\quad (j=1,2).
\]

By the Division Lemma \ref{division_lemma}, \[
|K_{G,p}(z,w)| = |J_G(z)|^{\frac{2}{p}}|M(z,\overline{w})|\, |J_G(w)|^{2-\frac{2}{p}}\leq C(p)|M(z,\overline{w})|, 
\]
because $p\in (1,+\infty)$. Writing \[
E_{\mathrm{reg}}(\delta) = \left\{d(z,bE)\geq \frac{\delta}{3}\right\}\cup \left\{ d(w,bE)\geq \frac{\delta}{3}\right\} \cup \left\{|g.z-w|\geq \frac{\delta}{3}\quad \forall g\in G \right\}, 
\]
we see that $E_{\mathrm{reg}}(\delta)$ is at positive distance from the singular set of $M(z,\overline{w})$. Thus $K_{G,p}(z,w)$ is uniformly bounded on $E_{\mathrm{reg}}(\delta)$. All in all, \[
|K_{G,p}(z,w)|\leq C(\delta, p)\frac{1}{|G|}\sum_{g\in G}\left(|K_{G_1,p}(g.z,w)|+|K_{G_2,p}(g.z,w)|+1\right).
\]

\begin{appendix}\section{}\label{appendix}

\begin{thm}\label{thm_appendix}
Let $B_2$ be the unit ball in $\mathbb C^2$ and let $\pi: B_2\dashrightarrow \pi(B_2)=:D$ be the normal ramified covering defined by
\[
\pi(z_1,z_2)=(z_1^2,z_2)
\]
with associated covering group $G=\{r,\id\}$, where $r(z_1,z_2)=(-z_1,z_2)$. Then, the integral operator with positive kernel $|K_{G,p}|$ is $L^p(B_2)$-bounded for every $p\in(1,\infty)$.
\end{thm}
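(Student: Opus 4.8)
The plan is to reduce the estimate to a single weighted integral inequality on $B_2$ and then dispatch it with Schur's test. First I would record the explicit ingredients. For $G=\{r,\id\}$ the only reflecting hyperplane is $\{z_1=0\}$, so $J_G(z)=z_1$ up to a unimodular constant, and the averaged kernel of Proposition~\ref{prp:estimate_average} is
\[
K_G(z,w)=\tfrac12\big(K(z,w)-K(z,r.w)\big),\qquad K(z,w)=\frac{c_2}{(1-\langle z,w\rangle)^{3}},
\]
$K$ being the Bergman kernel of $B_2$. Thus $K_{G,p}(z,w)=\tfrac12|z_1|^{\frac2p-1}|w_1|^{1-\frac2p}\big(K(z,w)-K(z,r.w)\big)$.

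The decisive feature is that the difference $K(z,w)-K(z,r.w)$ carries a factor $z_1\overline{w_1}$, which I would extract by the fundamental theorem of calculus. Setting $g(t)=c_2(1-tz_1\overline{w_1}-z_2\overline{w_2})^{-3}$ we have $K(z,w)=g(1)$ and $K(z,r.w)=g(-1)$, whence
\[
K_G(z,w)=\tfrac12\int_{-1}^{1}g'(t)\,dt=\tfrac{3c_2}{2}\,z_1\overline{w_1}\int_{-1}^{1}\frac{dt}{(1-\langle z,w_t\rangle)^{4}},\qquad w_t:=(tw_1,w_2).
\]
Since $|w_t|^2=t^2|w_1|^2+|w_2|^2\le|w|^2<1$ for $|t|\le1$, each $w_t$ lies in $B_2$ and the integrand is a genuine ball kernel of exponent $4$. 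Consequently
\[
|K_{G,p}(z,w)|\le C\,|z_1|^{\frac2p}\,|w_1|^{2-\frac2p}\int_{-1}^{1}\frac{dt}{|1-\langle z,w_t\rangle|^{4}}.
\]
The gain is that the exponents $\tfrac2p$ and $2-\tfrac2p$ are \emph{strictly positive} for every $p\in(1,\infty)$; this is exactly the cancellation that the crude bound $|K_G|\le\tfrac12(|K(z,w)|+|K(z,r.w)|)$ would throw away, leaving the weight $|z_1|^{\frac2p-1}$, which is not even locally integrable near $\{z_1=0\}$ once $p\ge4$. So keeping the cancellation is essential to cover the whole range.

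With this bound I would finish by Schur's test for positive kernels on $L^p(B_2,dV)$, with a test function $h(z)=(1-|z|^2)^{-\alpha}|z_1|^{-\beta}$. Writing the kernel bound as $|z_1|^{\frac2p}R(z,w)$ with $R(z,w)=|w_1|^{2-\frac2p}\int_{-1}^1|1-\langle z,w_t\rangle|^{-4}\,dt$ and using Fubini in $t$, the two Schur inequalities reduce to controlling, uniformly in $t\in[-1,1]$, weighted Forelli--Rudin integrals of the form
\[
\int_{B_2}\frac{|w_1|^{\mu}\,(1-|w|^2)^{\nu}}{|1-\langle z,w_t\rangle|^{4}}\,dV(w),
\]
and to checking that, after multiplication by the prefactor $|z_1|^{\frac2p}$, the result is dominated by $h(z)^{p'}$ (and symmetrically by $h(w)^{p}$) for a suitable choice of $\alpha,\beta$.

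The main obstacle is precisely this last step: proving the weighted Forelli--Rudin estimates in the presence of the extra factor $|w_1|^{\mu}$ and of the twisted denominator $\langle z,w_t\rangle=tz_1\overline{w_1}+z_2\overline{w_2}$, and then pinning down $\alpha,\beta$ so that \emph{both} Schur inequalities hold simultaneously for all $p\in(1,\infty)$. The positivity of the weight exponents (a consequence of the cancellation above) is what keeps these integrals convergent throughout the range, so I expect admissible exponents to exist; the real labor is the simultaneous bookkeeping of the powers of $(1-|z|^2)$ and of $|z_1|$, especially in the corner $z_1,w_1\approx0$ where $1-z_2\overline{w_2}$ is also small. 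Finally, since the kernel and the hypotheses are symmetric under $p\leftrightarrow p'$ together with the measure-preserving, modulus-of-$z_1$-preserving involution $r$, it should suffice to verify one of the two Schur inequalities, roughly halving the work.
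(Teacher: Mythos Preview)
Your integral representation via the fundamental theorem of calculus is correct and is a clean way to extract the cancellation: the global bound
\[
|K_{G,p}(z,w)|\le C\,|z_1|^{2/p}\,|w_1|^{2-2/p}\int_{-1}^{1}\frac{dt}{|1-\langle z,w_t\rangle|^{4}}
\]
is valid on all of $B_2\times B_2$, and you have correctly isolated the crucial point, namely that both exponents $2/p$ and $2-2/p$ are strictly positive for $p\in(1,\infty)$.

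The paper, however, finishes differently and more simply. It does \emph{not} run a Schur test with a two-parameter weight; instead it proves the pointwise domination $|K_{G,p}(z,w)|\le C\big(|K_{B_2}(z,w)|+|K_{B_2}(z,r.w)|\big)$ and then quotes the classical fact that the operator with kernel $|K_{B_2}|$ is $L^p(B_2)$-bounded for every $p\in(1,\infty)$. Concretely, the paper splits $B_2\times B_2$ into the region $\mathcal G=\{|z_1\overline{w_1}|\ge\tfrac12|1-z_2\overline{w_2}|\}$, where one checks that $|z_1|/|w_1|$ and $|w_1|/|z_1|$ are bounded (so the weight factor is harmless and no cancellation is needed), and its complement, where a power-series expansion in $z_1\overline{w_1}/(1-z_2\overline{w_2})$ yields $|K_{G,p}|\le C|z_1|^{2/p}|w_1|^{2-2/p}|1-z_2\overline{w_2}|^{-4}$. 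Then the elementary inequalities $|z_1|^2\le 2|1-z_2\overline{w_2}|$, $|w_1|^2\le 2|1-z_2\overline{w_2}|$ and $|1-\langle z,w\rangle|\le\tfrac32|1-z_2\overline{w_2}|$ absorb everything into $|K_{B_2}(z,w)|$.

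Your Schur-test route with weighted Forelli--Rudin integrals in the twisted variable $w_t$ is plausible but, as you concede, not carried out; it is also considerably more work than needed. A much shorter completion is available directly from your own bound: using $|z_1|^{2/p}|w_1|^{2-2/p}\le C|a|$ with $a=1-z_2\overline{w_2}$ (by the inequalities above), one is left with $|a|\int_{-1}^1|a-tb|^{-4}\,dt$, where $b=z_1\overline{w_1}$ satisfies $|b|\le|a|$. When $|b|\le|a|/2$ this is $\le C|a|^{-3}\le C'|a-b|^{-3}$. When $|a|/2<|b|\le|a|$, writing $t_0=a/b$ (so $1\le|t_0|\le2$) and noting that $\mathrm{dist}(t_0,[-1,1])\ge c\min(|t_0-1|,|t_0+1|)$ on this annulus, one gets $\int_{-1}^1|t_0-t|^{-4}\,dt\le C\big(|t_0-1|^{-3}+|t_0+1|^{-3}\big)$, hence $|a|\int_{-1}^1|a-tb|^{-4}\,dt\le C\big(|a-b|^{-3}+|a+b|^{-3}\big)$. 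Either way you recover the pointwise domination by $|K_{B_2}(z,w)|+|K_{B_2}(z,r.w)|$, matching the paper's conclusion without any of the Schur bookkeeping you anticipated as the main obstacle.
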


\begin{proof}
Up to an immaterial positive multiplicative constant, we have
\begin{align*}
    K_{G,p}(z,w)&= |z_1|^{\frac 2p-1}\Big(\frac{1}{(1-z_1\overline w_1-z_2\overline w_2)^3}-\frac{1}{(1+z_1\overline w_1-z_2\overline w_2
    )^3}\Big)|w_1|^{1-\frac 2p}, 
\end{align*}
where we are writing $z=(z_1,z_2)$ and $w=(w_1,w_2)$. 

Set  $\mathcal G=\{(z,w)\in B_2\times B_2: |z_1 \overline w_1|\geq\frac{1}{2}|1-z_2\overline w_2|\}$. In this region \begin{eqnarray*}
\frac{|z_1|}{|w_1|}\leq 2\frac{|z_1|^2}{|1-z_2\overline{w_2}|}\leq 2\frac{1-|z_2|^2}{(1-|z_2||w_2|)}\leq 4
\end{eqnarray*}
and an identical bound holds for $\frac{|w_1|}{|z_1|}$. Thus, on $\mathcal G$ the factor $(|z_1|/|w_1|)^{\frac 2p-1}$ is bounded and 
\begin{equation}\label{kernel_estimate_G}
|K_{G,p}(z,w)|\leq C(p) \Big(|K_{B_2}(z,w)|+|K_{B_2}(z,rw)|
\Big)
\end{equation}
where $K_{B_2}$ is the Bergman kernel of the unit ball.

On the complement $(B_2\times B_2)\backslash \mathcal G$ we take advantage of the power series expansion
\[
\frac{1}{(1-\alpha)^3}=\sum_{k=0}^{+\infty}\frac{(k+1)(k+2)}{2}\alpha^k \qquad (|\alpha|<1), 
\]  which allows to write
\begin{align*}
\frac{1}{(1-z_1\overline w_1-z_2\overline w_2)^3}=\frac{1}{(1-z_2\overline w_2)^3}\sum_{k=0}^{\infty}\frac{(k+1)(k+2)}{2} \left(\frac{z_1\overline w_1}{1-z_2\overline w_2}\right)^k
\end{align*}
and an analogous identity for the term $(1+z_1\overline w_1-z_2\overline w_2)^{-3}$. We get
\begin{align*}
K_{G,p}(z,w)&=|z_1|^{\frac 2p-1}|w_1|^{1-\frac 2p}\frac{1}{(1-z_2\overline w_2)^3}\sum_{k=0}^{\infty}\frac{(k+1)(k+2)}{2}\frac{(z_1\overline w_1)^{k}}{(1-z_2\overline w_2)^k}\big(1-(-1)^{k}\big)\\
&=|z_1|^{\frac 2p-1}|w_1|^{1-\frac 2p}\frac{z_1\overline w_1}{(1-z_2\overline w_2)^4}\sum_{k=0}^{\infty}(2k+2)(2k+3)\frac{(z_1\overline w_1)^{2k}}{(1-z_2\overline w_2)^{2k}}.
\end{align*}
Since $C:=\sum_{k=0}^{\infty}\frac{(2k+2)(2k+3)}{4^k}<+\infty$, we obtain
\begin{equation}\label{bound_G}
|K_{G,p}(z,w)|\leq C \frac{|z_1|^{\frac 2p}|w_1|^{2-\frac 2p}}{|1-z_2\overline w_2|^4}\qquad \forall (z,w)\notin \mathcal{G}.
\end{equation}
Notice that both powers in the numerator are positive. Hence, we may use $|z_1|^2\leq 1-|z_2|^2\leq 2|1-z_2\overline w_2|$
and the similar estimate $|w_1|^2\leq 2|1-z_2\overline w_2|$,
to bound the RHS of \eqref{bound_G} with a constant times $\frac{1}{|1-z_2\overline w_2|^3}$.

We finally observe that, on the complement of $\mathcal{G}$,
$|1-z_1\overline w_1-z_2\overline w_2|\leq \frac{3}{2}|1-z_2\overline w_2|$
which allows to conclude that on this set we have
\begin{equation}\label{kernel_estimate_Gc}
|K_{G,p}(z,w)|\leq C\frac{1}{|1-z_1\overline w_1-z_2\overline w_2|^3}=C'|K_{B_2}(z,w)|.   
\end{equation}
The conclusion follows from \eqref{kernel_estimate_G}
 and \eqref{kernel_estimate_Gc} and the well-known $L^p$-boundedness, for $p\in(1,\infty)$, of the integral operator with kernel $|K_{B_2}(z,w)|$ (see, e.g., Chapter 7 of \cite{rudin_ftub}).
 \end{proof}
\end{appendix}
  \bibliography{Bergman-bib}

\providecommand{\bysame}{\leavevmode\hbox to3em{\hrulefill}\thinspace}
\providecommand{\MR}{\relax\ifhmode\unskip\space\fi MR }
\providecommand{\MRhref}[2]{%
  \href{http://www.ams.org/mathscinet-getitem?mr=#1}{#2}
}
\providecommand{\href}[2]{#2}
\begin{thebibliography}{HWW21b}

\bibitem[AW21]{arreche_williams}
C.~E. Arreche and N.~F. Williams, \emph{Normal reflection subgroups of complex
  reflection groups}, J. Inst. of Math. Jussieu (2021), 1--39.

\bibitem[B\'86]{bekolle_canadian}
D.~B\'{e}koll\'{e}, \emph{Projections sur des espaces de fonctions holomorphes
  dans des domaines plans}, Canad. J. Math. \textbf{38} (1986), no.~1,
  127--157.

\bibitem[Bar84]{barrett_irregularity}
D.~E. Barrett, \emph{Irregularity of the {B}ergman projection on a smooth
  bounded domain in {$\mathbb C^2$}}, Ann. of Math. (1984), 431--436.

\bibitem[BB78]{bekolle_bonami}
D.~Bekoll\'{e} and A.~Bonami, \emph{In\'{e}galit\'{e}s \`a poids pour le noyau
  de {B}ergman}, C. R. Acad. Sci. Paris S\'{e}r. A-B \textbf{286} (1978),
  no.~18, A775--A778.

\bibitem[BCEM21]{bender_chakrabarti_edholm_mainkar}
C.~Bender, D.~Chakrabarti, L.~Edholm, and M.~Mainkar, \emph{{$L^p$}-regularity
  of the {B}ergman projection on quotient domains}, Canad. J. Math. (2021),
  1–41.

\bibitem[Bek82a]{bekolle_ball}
D.~Bekollé, \emph{In\'{e}galit\'{e}s \`{a} poids pour le projecteur de bergman
  dans la boule unité de {$\mathbb C^n$}}, Studia Math. \textbf{71} (1982),
  no.~3, 305--323.

\bibitem[Bek82b]{bekolle_compte}
\bysame, \emph{In\'{e}galit\'{e}s {$L^{p}$} pour les projecteurs de {B}ergman
  de certains domaines de {${\mathbb C}^{2}$}}, C. R. Acad. Sci. Paris S\'{e}r.
  I Math. \textbf{294} (1982), no.~12, 395--397.

\bibitem[Bel81]{bell_proper}
S.~R. Bell, \emph{Proper holomorphic mappings and the {B}ergman projection},
  Duke Math. J. \textbf{48} (1981), no.~1, 167--175.

\bibitem[Bel82]{bell_1982}
\bysame, \emph{The {B}ergman kernel function and proper holomorphic mappings},
  Trans. Amer. Math. Soc. \textbf{270} (1982), no.~2, 685--691.

\bibitem[BL82]{bonami_lohoue}
A.~Bonami and N.~Lohou{\'e}, \emph{Projecteurs de {B}ergman et {S}zeg{\"o} pour
  une classe de domaines faiblement pseudo-convexes et estimations {$ L^{p}
  $}}, Compositio Math. \textbf{46} (1982), no.~2, 159--226.

\bibitem[Bre93]{bredon_book}
G.~E. Bredon, \emph{Topology and geometry}, Graduate Texts in Mathematics, vol.
  139, Springer-Verlag, New York, 1993.

\bibitem[CD06]{charpentier_dupain}
P.~Charpentier and Y.~Dupain, \emph{Estimates for the {B}ergman and {S}zeg{\"o}
  projections for pseudoconvex domains of finite type with locally
  diagonalizable {L}evi form}, Publ. Mat. \textbf{50} (2006), no.~2, 413--446.

\bibitem[Che55]{chevalley}
C.~Chevalley, \emph{Invariants of finite groups generated by reflections},
  Amer. J. Math. \textbf{77} (1955), no.~4, 778--782.

\bibitem[Chi89]{chirka_book}
E.~M. Chirka, \emph{Complex analytic sets}, Mathematics and its Applications
  (Soviet Series), vol.~46, Kluwer Academic Publishers Group, Dordrecht, 1989.

\bibitem[CKY20]{chen_krantz_yuan_polydisc}
L.~Chen, S.~G. Krantz, and Y.~Yuan, \emph{{$L^p$} regularity of the {B}ergman
  projection on domains covered by the polydisc}, J. Funct. Anal. \textbf{279}
  (2020), no.~2, 108522, 20.

\bibitem[Dal22]{dallara-L1}
G.~M. Dall'Ara, \emph{Around $ {L}^1$(un)boundedness of {B}ergman and {S}zeg{\H
  o} projections}, J. Funct. Anal. https://doi.org/10.1016/j.jfa.2022.109550
  (2022).

\bibitem[Edh16]{edholm_pacific}
L.~Edholm, \emph{Bergman theory of certain generalized {H}artogs triangles},
  Pacific J. Math. \textbf{284} (2016), no.~2, 327--342.

\bibitem[EM16]{edholm_mcneal}
L.~Edholm and J.~McNeal, \emph{The {B}ergman projection on fat {H}artogs
  triangles: {$L^p$} boundedness}, Proc. Amer. Math. Soc. \textbf{144} (2016),
  no.~5, 2185--2196.

\bibitem[EM17]{EM3}
L.~D. {Edholm} and J.~D. {McNeal}, \emph{{Bergman subspaces and subkernels:
  degenerate \(L^p\) mapping and zeroes}}, {J. Geom. Anal.} \textbf{27} (2017),
  no.~4, 2658--2683.

\bibitem[FH98]{franccis_hanges}
G.~Fran{\c{c}}is and N.~Hanges, \emph{Analytic regularity for the {B}ergman
  kernel}, Journ\'{e}es ``\'{E}quations aux {D}\'{e}riv\'{e}es {P}artielles''
  ({S}aint-{J}ean-de-{M}onts, 1998), Univ. Nantes, Nantes, 1998, pp.~Exp. No.
  V, 11.

\bibitem[FR74]{forelli_rudin}
F.~Forelli and W.~Rudin, \emph{Projections on spaces of holomorphic functions
  in balls}, Indiana Univ. Math. J. \textbf{24} (1974), no.~6, 593--602.

\bibitem[Gho21]{ghosh2021weighted}
G.~Ghosh, \emph{The weighted {B}ergman spaces and pseudoreflection groups},
  arXiv preprint arXiv:2104.14162 (2021), 1--23.

\bibitem[GN22]{ghosh2022toeplitz}
G.~Ghosh and E.~Narayanan, \emph{{T}oeplitz operators on the {B}ergman spaces
  of quotient domains}, arXiv preprint arXiv:2202.03184 (2022), 1--24.

\bibitem[HWW21a]{huo_wagner_wick_certain}
Z.~Huo, N.~A. Wagner, and B.~D. Wick, \emph{A {B}\'{e}koll\'{e}--{B}onami class
  of weights for certain pseudoconvex domains}, J. {G}eom. {A}nal. \textbf{31}
  (2021), no.~6, 6042--6066.

\bibitem[HWW21b]{huo_wagner_wick}
\bysame, \emph{Bekoll\'e-{B}onami estimates on some pseudoconvex domains},
  Bull. Sci. Math. \textbf{170} (2021), 102993.

\bibitem[KMPS22]{krantz_monguzzi_peloso_stoppato}
S.~G. Krantz, A.~Monguzzi, M.~M. Peloso, and C.~Stoppato, \emph{Irregularity of
  the {B}ergman projection on smooth unbounded worm domains}, arXiv preprint
  arXiv:2204.05111 (2022), 1--12.

\bibitem[KP08]{krantz_peloso_houston}
S.~G. Krantz and M.~M. Peloso, \emph{The {B}ergman kernel and projection on
  non-smooth worm domains}, Houston J. Math. \textbf{34} (2008), no.~3,
  873--950.

\bibitem[KPS16]{krantz_peloso_stoppato}
S.~G. Krantz, M.~M. Peloso, and C.~Stoppato, \emph{Bergman kernel and
  projection on the unbounded {D}iederich-{F}orn\ae ss worm domain}, Ann. Sc.
  Norm. Super. Pisa Cl. Sci. (5) \textbf{16} (2016), no.~4, 1153--1183.

\bibitem[Kra13]{krantz_book}
S.~G. Krantz, \emph{Geometric analysis of the {B}ergman kernel and metric},
  vol. 268, Springer, 2013.

\bibitem[LS12]{lanzani_stein_minimal}
L.~Lanzani and E.~M. Stein, \emph{The {B}ergman projection in {$ L^p$ } for
  domains with minimal smoothness}, Illinois J. Math. \textbf{56} (2012),
  no.~1, 127--154.

\bibitem[LT09]{lehrer_taylor}
G.~I. Lehrer and D.~E. Taylor, \emph{Unitary reflection groups}, vol.~20,
  Cambridge University Press, 2009.

\bibitem[McN94]{mcneal_singular}
J.~D. McNeal, \emph{The {B}ergman projection as a singular integral operator},
  J. Geom. Anal \textbf{4} (1994), no.~1, 91--103.

\bibitem[Mon21]{monguzzi}
A.~Monguzzi, \emph{Holomorphic function spaces on the {H}artogs triangle},
  Math. Nachr. \textbf{294} (2021), no.~11, 2209--2231.

\bibitem[MS94]{mcneal_stein}
J.~D. McNeal and E.~M. Stein, \emph{Mapping properties of the {B}ergman
  projection on convex domains of finite type}, Duke Math. J. \textbf{73}
  (1994), no.~1, 177--199.

\bibitem[NP21]{nagel_pramanik}
A.~Nagel and M.~Pramanik, \emph{Bergman spaces under maps of monomial type}, J.
  Geom. Anal. \textbf{31} (2021), no.~5, 4531--4560.

\bibitem[NRSW89]{NRSW}
A.~Nagel, J.-P. Rosay, E.~M. Stein, and S.~Wainger, \emph{Estimates for the
  {B}ergman and {S}zeg{\H o} kernels in {$\mathbb C^2$}}, Ann. of Math. (1989),
  113--149.

\bibitem[PS77]{phong_stein}
D.~H. Phong and E.~M Stein, \emph{Estimates for the {B}ergman and {S}zeg{\"o}
  projections on strongly pseudoconvex domains}, Duke Math. J. \textbf{44}
  (1977), no.~3, 695--704.

\bibitem[Rud80]{rudin_ftub}
W.~Rudin, \emph{Function theory in the unit ball of {${\mathbb C}^n$}}, vol.
  241, Springer-Verlag, New York-Berlin, 1980.

\bibitem[Rud82]{rudin_refl}
\bysame, \emph{Proper holomorphic maps and finite reflection groups}, Indiana
  Univ. Math. J. \textbf{31} (1982), no.~5, 701--720.

\bibitem[ST54]{shephard_todd}
G.~C. Shephard and J.~A. Todd, \emph{Finite unitary reflection groups}, Canad.
  J. Math. \textbf{6} (1954), 274--304.

\bibitem[{Zey}13]{zeytuncu}
Y.~E. {Zeytuncu}, \emph{{$L^p$} regularity of weighted {B}ergman projections},
  Trans. Amer. Math. Soc. (2013), 2959--2976.

\bibitem[{Zey}20]{zeytuncu_survey}
\bysame, \emph{{A survey of the \(L^p\) regularity of the {B}ergman
  projection}}, {Complex Anal. Synerg.} \textbf{6} (2020), no.~2, 7, Id/No 19.

\end{thebibliography}
 \bibliographystyle{amsalpha}
\end{document}